\documentclass[12pt]{amsart}
\usepackage{a4wide}
\usepackage[utf8]{inputenc}
\usepackage{amsmath}
\usepackage{amsthm}
\usepackage{amssymb}
\usepackage{amsfonts}
\usepackage{amsopn}
\usepackage{graphicx}
\usepackage{enumerate}
\usepackage{color}
\usepackage{mathtools}
\usepackage{mathrsfs}
\usepackage{bbm}
\usepackage{yhmath}
\usepackage{cite}
\usepackage[colorlinks,linkcolor=blue,anchorcolor=blue,citecolor=blue]{hyperref}

\setlength{\parskip}{0.3\baselineskip}

\newtheorem{theorem}{Theorem}[section]

\newtheorem{lemma}[theorem]{Lemma}

\newtheorem{corollary}[theorem]{Corollary}

\theoremstyle{definition}
\newtheorem{example}{Example}[section]

\newtheorem*{question}{Question}

\numberwithin{equation}{section}

\newcommand{\mbb}{\mathbb}

\newcommand{\mcal}{\mathcal}

\newcommand{\set}[1]{\left\{ #1 \right\}}

\newcommand{\R}{\mathbb{R}}
\newcommand{\Q}{\mathbb{Q}}
\newcommand{\Z}{\mathbb{Z}}
\newcommand{\N}{\mathbb{N}}

\newcommand{\f}{\infty}

\newcommand{\wh}[1]{\widehat{#1}}

\newcommand{\ep}{\varepsilon}
\newcommand{\sse}{\subseteq}
\newcommand{\sm}{\setminus}
\newcommand{\D}{\;\mathrm{d}}

\newcommand{\red}[1]{{\color{red}#1}}

\title[Orthogonal bases of exponential functions for
 infinite convolutions]{Orthogonal bases of exponential functions for
 infinite convolutions
 }

\author[J. J. Miao]{Jun Jie Miao}
\address[J. J. Miao]{School of Mathematical Sciences,  Key Laboratory of MEA(Ministry of Education) \& Shanghai Key Laboratory of PMMP,  East China Normal University, Shanghai 200241, China}
\email{jjmiao@math.ecnu.edu.cn}

\author[H. Zhao]{Hong Bo Zhao}
\address[H. Zhao]{School of Mathematical Sciences,  Key Laboratory of MEA(Ministry of Education) \& Shanghai Key Laboratory of PMMP,  East China Normal University, Shanghai 200241, China}
\email{2504245357@qq.com}

\date{\today}
\subjclass[2010]{28A80, 42C30, 60B10}
\begin{document}

\maketitle

\begin{abstract} 
Let $\mu$ denot the infinite convolution generated by $\{(N_k,B_k)\}_{k=1}^\infty$ given by
$$
\mu =\delta_{{N_1}^{-1}B_1}\ast\delta_{(N_1N_2)^{-1}B_2}\ast\dots\ast\delta_{(N_1N_2\cdots N_k)^{-1}B_k} *\cdots.
$$
where $B_k$ is a complete residue system for each integer $k>0$. We write
$$
\nu_{>k}=\delta_{N_{k+1}^{-1} B_{k+1}} * \delta_{(N_{k+1} N_{k+2})^{-1} B_{k+2}} * \cdots.
$$
Since the elements in $B_k$ may have very large absolute values, the infinite convolution may not be compactly supported. In this paper, we study the necessary and sufficient conditions for such infinite convolutions being a spectral measure.

Generally, for such infinite convolutions, the necessary conditions for spectrality mainly depend on the properties of the polynomials generated by the complete residue systems. The main result shows that if every $B_k$ satisfies uniform discrete zero condition, and $\{\nu_{>k}\}_{k=1}^\infty$ is {\it tight}, then $\# B_k | N_k$ for all integers $k\geq 2$. For some special complete residue systems $\{B_k\}_{k=1}^\infty$, we provide the necessary and sufficient conditions for $\mu$ being a spectral measure.  
\end{abstract}

	\section{Introduction}
	
	A Borel probability measure $\mu$ on $\R^d$ is called a \emph{spectral measure} if there exists a countable subset $\Lambda \sse \R^d$ such that the family of exponential functions
$$
\set{e_\lambda(x) = e^{-2\pi i <\lambda, x>}: \lambda \in \Lambda},
$$
where $<\lambda, x>$ is the standard inner product in $\R^d$,
 forms an orthonormal basis in $L^2(\mu)$, and the set $\Lambda$ is called a \emph{spectrum} of $\mu$. Sometimes, we call $(\mu, \Lambda)$ a {\em spectral pair}.   The  spectrum  existence of $\mu$ is a fundamental problem in harmonic analysis,  and it was initiated by Fuglede in~\cite{ Fuglede-1974}. From then on, it has been studied extensively, see \cite{Dai-2012, Dai-He-Lai-2013, Dai-He-Lau-2014, DJ07, Dutkay-Han-Sun-2014, Dutkay-Han-Sun-Weber-2011, Laba-Wang-2002, Landau67, Matolcsi-2005, Strichartz-2000, Tao-2004 }   and references therein for details.

In 1998, Jorgensen and Pedersen~\cite{Jorgensen-Pedersen-1998} first  discovered that some fractal measures may also have spectrum. A simple example is that the self-similar measure on $\R$ given by the identity
$$
\mu(\;\cdot\;) = \frac{1}{2} \mu(4\;\cdot\;) + \frac{1}{2} \mu(4\;\cdot\; -2)
$$ is a spectral measure with a spectrum
$$
\Lambda = \bigcup_{k=1}^\f \set{\ell_1 + 4\ell_2 + \cdots + 4^{k-1} \ell_k: \ell_1,\ell_2,\cdots,\ell_k \in \set{0,1}}.
$$
From then on, the spectrality and non-spectrality of various singular fractal measures, such as self-affine measures  and Cantor-Moran measures, have been extensively studied, see \cite{An-Fu-Lai-2019,An-He-2014,An-He-Lau-2015,An-He-Li-2015,An-Wang-2021,Dai-He-Lau-2014,Deng-Chen-2021,
Dutkay-Haussermann-Lai-2019,Dutkay-Lai-2014,Dutkay-Lai-2017,Fu-Wen-2017,Laba-Wang-2002,Liu-Dong-Li-2017, Miao-2022} and references therein for details. Readers may refer to~\cite{Falco03} for the details on fractal geometry.

In 2013, He, Lai and Lau proved that a spectral measure $\mu$ must be
of pure type, i.e., $\mu$ is a discrete measure with finite support, absolutely continuous or singularly continuous measure with respect to the Lebesgue measure, see~\cite{HLL} for details.  It implies that there is a big difference between absolutely continuous measures and singular measures in the theory of spectrum. When $\mu$ is the Lebesgue measure restricted on a set $ K$ in $R^d$, it is well-known that the spectral property is closely connected with the tiling
property of $K$, and it is known as the Fuglede conjecture, see~\cite{Fuglede-1974, Kolountzakis-Matolcsi-2006a, Kolountzakis-Matolcsi-2006b,Laba-2001,  Tao-2004 } and references therein for details.

All these fractal spectral measures may be regarded as  special infinite convolutions, and it is natural to investigate the spectrality of  infinite convolutions. 	For a finite subset $A \sse \R^d$, the uniform discrete measure supported on $A$ is given by
\begin{equation*}
\delta_A = \frac{1}{\# A} \sum_{a \in A} \delta_a,
\end{equation*}
where  $\#$ denotes the cardinality of a set and $\delta_a$ denotes the Dirac measure at the point $a$. Given a sequence $\{(N_k,B_k)\}_{k=1}^\infty $, for each integer $k\geq 1$, we write
\begin{equation}\label{def_mun}
\mu_k =\delta_{{N_1}^{-1}B_1}\ast\delta_{(N_1N_2)^{-1}B_2}\ast\dots\ast\delta_{(N_1N_2\cdots N_k)^{-1}B_k},
\end{equation}
where $*$ denotes the convolution between measures. If the sequence $\{\mu_k\}_{k=1}^\infty$ converges weakly to a Borel probability measure $\mu$, then we call $\mu$ the \emph{infinite convolution} of $\{(N_k,B_k)\}_{k=1}^\infty,$ denoted by
\begin{equation}\label{infinite-convolution}
\mu =\delta_{{N_1}^{-1}B_1}\ast\delta_{(N_1N_2)^{-1}B_2}\ast\dots\ast\delta_{(N_1N_2\cdots N_k)^{-1}B_k} *\cdots.
\end{equation}
For each integer $k\geq 1$, we write
\begin{equation}\label{def_mugn}
\mu_{>k}=\delta_{({N_{1}\ldots N_{k+1}})^{-1}B_{k+1}}\ast\delta_{(N_{1}\ldots N_{k+2})^{-1}B_{k+2}}\ast\cdots.
\end{equation}
It is clear that $\mu = \mu_k * \mu_{>k}$.

Admissible pairs are the key to study the spectrality of infinite convolutions. Given a $d \times d$ expansive integral matrix $N$( all eigenvalues have modulus strictly greater than $1$) and a finite subset  $B\sse \Z^d$ with $\# B\ge 2$.  If there exists $L\sse \Z^d$ such that the matrix
$$
\left[ \frac{1}{\sqrt{\# B}} e^{-2 \pi i  (N^{-1}b)\cdot\ell }  \right]_{b \in B, \ell \in L} $$
is unitary, we call $(N, B)$ an {\it admissible pair} in $\R^d$, see~\cite{Dutkay-Haussermann-Lai-2019} for details.

It was first raised by Strichartz~\cite{Strichartz-2000} to study the spectrality of the infinite convolutions based on admissible pairs.
From then on, people focused on find the sufficient conditions for infinite convolutions, that is,
\begin{question}
\emph{Given a sequence of admissible pairs $\{(N_k, B_k)\}_{k=1}^\f$, under what assumptions is the infinite convolution $\mu$ spectral?}
\end{question}

There have been many affirmative partial results obtained for the question, see~\cite{An-Fu-Lai-2019,An-He-2014,An-He-Lau-2015,An-He-Li-2015,Dutkay-Haussermann-Lai-2019,
Dutkay-Lai-2017,Fu-Wen-2017,Laba-Wang-2002} and references therein for details.
If all admissible pairs are identical, i.e., $(N_k,B_k)=(N,B)$ for all $k>0$, then the infinite convolution reduces to a self-affine measure. Dutkay, Haussermann and Lai \cite{Dutkay-Haussermann-Lai-2019} proved that the self-affine measure with equal weights generated by a Hadamard triple in $\R^d$ is a spectral measure, and this result for the one-dimensional case was proved by {\L}aba and Wang \cite{Laba-Wang-2002}.

Instead of finding the sufficient conditions, it is natural to consider the necessary conditions, that is,
\begin{question}
Let the infinite convolution $\mu$ given by \eqref{infinite-convolution} be a spectral measure. What  properties does the sequence $\{(N_k, B_k)\}_{k=1}^\f$ have?
\end{question}
In \cite{Dai-He-Lau-2014}, Dai, He and Lau considered the question for self-similar spectral measures in $\R$,
$$
\mu(\;\cdot\;) = \frac{1}{M}\sum_{j=0}^{M-1} \mu(N(\cdot)-j),
$$
which is a special class of infinite convolutions, i.e., $(N_k, B_k)=(N, B)$ for  all integers $k>0$, where real $N>1$ and $B=\{0,1,\ldots, M-1\}$ for some integer $M\geq 2$. They proved that the self-similar measure $\mu$ is a spectral measure if and only if $N$ is an integer and $M| N$. Recently, Deng, He, Li and Ye  in~\cite{DHLY} studied this question for a class of special infinite convolutions in $R^2$, and they provided a sufficient and necessary condition for the infinite convolution of $\{(R_k,B_k)\}_{k=1}^\infty$ being a spectral measure, where
$$
B_k=\left\{\left(\begin{matrix}
	0\\
	0
\end{matrix}\right),
\left(\begin{matrix}
	1\\
	0
\end{matrix}\right),
\left(\begin{matrix}
	0\\
	1
\end{matrix}\right)\right\},$$
for all $k\in\N$.

Inspired by their work, see~\cite{Dai-He-Lau-2014, DHLY} for details, we study the necessary and sufficient conditions for the spectrality of infinite convolutions $\mu$ generated by a sequence of complete residue systems. Currently, people mainly focused on the spectral infinite convolutions $\mu$ with compact support, but in our setting, the support of infinite convolutions may be unbounded. Our main results are stated in
Section~\ref{sec_M}. In Section~\ref{sec_Pre}, we introduce some basic facts and the key tools used in our work.  In Section \ref{sec_pf}, we analyse the structure of the spectrum of a spectral infinite convolution. The necessary conditions are proved in Section \ref{sec_pfthm}. In Section~\ref{sec_noncpt},  we provide a sufficient and necessary condition for a class special infinite convolutions with unbounded support being a spectral measure.

\section{Complete residue systems and main results}\label{sec_M}

Given an admissible pair $( N,B) $, we call $B$ a {\it general consecutive set}  if
\[
B\equiv \{0,1,2,\cdots, M-1\} \pmod{N},
\]
for some integer $M\geq 2$. Note that $M=\# B$.  Since $( N,B) $ is an admissible pair, it is clear that $M \mid N$. A sequence $\{( N_k,B_k)\}_{k=1}^\infty $ satisfies {\it remainder bounded condition (RBC)} if
\[
\sum_{k=1}^{\infty} \frac{\# B_{k,2}}{\# B_k} < \infty,
\]
where $B_{k,1}=B_k \cap \{0,1,\cdots,N_k-1\}$ and $B_{k,2}=B_k \backslash B_{k,1}.$

Recently, Miao and Zhao in \cite{MZ24} studied the existence and spectrality of infinite convolutions generated by general consecutive sets, and they provided the following sufficient condition for $\mu$ being a spectral measure.

\begin{theorem}\label{thm_MZ}
Let $\{(N_k,B_k)\}_{k=1}^\infty$ be  a sequence  of admissible pairs of general consecutive sets with {\it RBC} satisfied. Then the infinite convolution $\mu$ of $\{(N_k,B_k)\}_{k=1}^\infty$ exists and $\mu$ is a spectral measure.
\end{theorem}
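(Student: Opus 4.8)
The plan is to treat existence and spectrality separately, using the Fourier transform as the common vehicle. Write $q_j = N_1N_2\cdots N_j$ (with $q_0=1$) and let $m_j(\xi) = \widehat{\delta_{B_j}}(\xi) = \frac{1}{\#B_j}\sum_{b\in B_j}e^{-2\pi i b\xi}$ be the mask attached to $B_j$, so that $\widehat{\mu_k}(\xi) = \prod_{j=1}^k m_j(\xi/q_j)$. Since $(N_j,B_j)$ is admissible and $B_j\equiv\{0,1,\ldots,M_j-1\}\pmod{N_j}$ with $M_j=\#B_j\mid N_j$, the natural Hadamard complement is the arithmetic progression $L_j=\frac{N_j}{M_j}\{0,1,\ldots,M_j-1\}\sse\Z$, for which $\bigl[\frac{1}{\sqrt{M_j}}e^{-2\pi i (N_j^{-1}b)\ell}\bigr]_{b\in B_j,\ell\in L_j}$ reduces to the DFT matrix. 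I take the candidate spectrum $\Lambda=\set{\sum_{j=1}^n q_{j-1}\ell_j : n\ge 1,\ \ell_j\in L_j}$, with finite pieces $\Lambda_n=\set{\sum_{j=1}^n q_{j-1}\ell_j : \ell_j\in L_j}$.

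For existence I would show that $\prod_{j=1}^\infty m_j(\xi/q_j)$ converges uniformly on compact sets, so that $\mu_k\to\mu$ weakly by L\'evy's continuity theorem. The estimate splits each factor along $B_j=B_{j,1}\cup B_{j,2}$: from $|1-m_j(\xi/q_j)|\le \frac{1}{\#B_j}\sum_{b\in B_j}|1-e^{-2\pi i b\xi/q_j}|$, the $B_{j,1}$ terms (where $b<N_j$) contribute at most $2\pi|\xi|N_j/q_j = 2\pi|\xi|/q_{j-1}$, while the $B_{j,2}$ terms are bounded crudely by $2\#B_{j,2}/\#B_j$. Summing over $j$, the first series converges geometrically because $q_{j-1}\ge 2^{j-1}$, and the second converges precisely by RBC; this is the first essential use of the hypothesis.

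Orthogonality of $\set{e_\lambda:\lambda\in\Lambda}$ amounts to $\widehat\mu(\lambda-\lambda')=0$ for distinct $\lambda,\lambda'\in\Lambda$. Writing $\lambda=\sum q_{j-1}\ell_j$, $\lambda'=\sum q_{j-1}\ell_j'$ and taking $j_0$ to be the least index with $\ell_{j_0}\ne\ell_{j_0}'$, the factor $m_{j_0}\bigl(\tfrac{\lambda-\lambda'}{q_{j_0}}\bigr)$ already vanishes: since $L_{j_0}\sse\Z$, the contributions of the indices $j>j_0$ to $(\lambda-\lambda')/q_{j_0}$ are integers, so $1$-periodicity of $m_{j_0}$ collapses the argument to $(\ell_{j_0}-\ell_{j_0}')/N_{j_0}$, a zero of $m_{j_0}$ by the Hadamard relation for $(N_{j_0},B_{j_0})$. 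The same computation, together with a cardinality count ($\#\Lambda_n=M_1\cdots M_n$ equals the number of atoms of $\mu_n$), shows each $(\mu_n,\Lambda_n)$ is a spectral pair.

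The heart of the matter, and the step I expect to be hardest, is completeness, i.e. $Q_\Lambda(\xi):=\sum_{\lambda\in\Lambda}|\widehat\mu(\xi+\lambda)|^2\equiv 1$. Here the unbounded support is the genuine obstacle: the usual compactly-supported arguments fail, and since $\widehat{\mu_{>n}}(\xi)=\widehat{\nu_{>n}}(\xi/q_n)$, one cannot expect $\widehat{\mu_{>n}}(\xi+\lambda)\approx 1$ for all $\lambda\in\Lambda_n$. Instead I would use the same $B_{j,1}/B_{j,2}$ split to prove that the rescaled tails $\nu_{>n}=\delta_{N_{n+1}^{-1}B_{n+1}}*\delta_{(N_{n+1}N_{n+2})^{-1}B_{n+2}}*\cdots$ obey a uniform lower bound $|\widehat{\nu_{>n}}(\xi)|\ge\delta_0>0$ on a fixed neighborhood $|\xi|\le\epsilon_0$ for all large $n$, so that $\{\nu_{>n}\}$ is equi-positive; RBC again supplies the uniform control of the $B_{j,2}$ contributions and, through a Borel-Cantelli estimate on how often large digits are active, the tightness of $\{\nu_{>n}\}$. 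With equi-positivity and tightness of $\{\nu_{>n}\}$ in hand, together with the finite spectral pairs $(\mu_n,\Lambda_n)$ and $\Lambda_n\nearrow\Lambda$, I would invoke the equi-positivity completeness criterion for infinite convolutions in the spirit of \cite{Dutkay-Haussermann-Lai-2019,An-He-Lau-2015} to conclude $Q_\Lambda\equiv 1$. The delicate point is exactly that this criterion was designed for bounded digit sets, so the argument must be run with the tightness of $\{\nu_{>n}\}$ playing the role of compact support, and one must check that the consecutive structure prevents any extra zero of the masks $m_j$ from destroying the uniform positivity.
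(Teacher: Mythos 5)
You should first be aware that this paper contains no proof of Theorem \ref{thm_MZ}: it is quoted from the preprint \cite{MZ24} by the same authors, so there is no in-paper argument to compare yours against. Judged on its own terms, your existence step is correct and complete: splitting each mask along $B_{j,1}\cup B_{j,2}$, bounding the $B_{j,1}$ contribution by $2\pi|\xi|/q_{j-1}$ and the $B_{j,2}$ contribution by $2\#B_{j,2}/\#B_j$, and summing using $q_{j-1}\ge 2^{j-1}$ together with RBC does give uniform convergence of $\prod_j m_j(\xi/q_j)$ on compact sets, so L\'evy's continuity theorem applies. The orthogonality computation, the identification of each $(\mu_n,\Lambda_n)$ as a finite spectral pair, and the derivation of tightness of $\{\nu_{>n}\}$ from RBC (this last point is exactly Lemma \ref{lem_tight} of the present paper) are all sound.

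The genuine gap is in the completeness step, and it is not just a matter of citing the right criterion. You claim that a uniform lower bound $|\hat{\nu}_{>n}(\xi)|\ge\delta_0$ on a fixed neighbourhood $|\xi|\le\epsilon_0$ of the origin makes $\{\nu_{>n}\}$ equi-positive. It does not: equi-positivity in the sense of \cite{Miao-2022,An-He-Lau-2015} requires that for \emph{every} $x\in[0,1)$ there exist an integer $k_{x,n}$ with $|\hat{\nu}_{>n}(x+k_{x,n}+y)|\ge\delta_0$ for $|y|<\epsilon_0$, and this stronger statement is what the completeness argument actually consumes. Indeed, in $\sum_{\lambda\in\Lambda_n}|\hat{\mu}_n(\xi+\lambda)|^2\,|\hat{\nu}_{>n}((\xi+\lambda)/q_n)|^2$ the arguments $(\xi+\lambda)/q_n$ sweep out essentially the whole interval $[0,\sup_{\lambda\in\Lambda_n}\lambda/q_n]$, whose length approaches $1$ (already for $N_k=M_k=2$ one has $\sup_{\lambda\in\Lambda_n}\lambda/q_n\to 1$), and $\hat{\nu}_{>n}$ has no uniform lower bound there: it is continuous and vanishes at $1$ whenever $N_{n+1}=M_{n+1}$, so $|\hat{\nu}_{>n}((\xi+\lambda)/q_n)|$ is arbitrarily small for some $\lambda\in\Lambda_n$. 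Consequently your fixed candidate $\Lambda=\bigcup_n\Lambda_n$ cannot be verified by this mechanism, and the equi-positivity machinery of \cite{Miao-2022} produces a spectrum only after inserting the integer corrections $k_{x,n}$ into the digits, so the resulting spectrum is generally not your $\Lambda$. Establishing full equi-positivity from RBC for general consecutive sets --- typically by comparing $\nu_{>n}$ with the idealized convolution built from the $B_{k,1}$ and controlling the discrepancy by $\sum_k\#B_{k,2}/\#B_k$ --- is the real content of the theorem, and your sketch does not supply it.
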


To study the necessary condition, we use a more general definition than general consecutive set, which is frequently used in algebra.
Given a set $B\subset\Z$ and $M\in\N$, we call $B$ a {\it complete residue system} modulo $M$, if
\begin{equation}\label{def_CRS}
B\equiv \{0,1,2,\cdots, M-1\} \pmod{M}.
\end{equation}
Given an admissible pair $(N,B)$, it is clear that if $B$ is a general consecutive set, then $\# B |N$ and $B$ is a complete residue system modulo $\# B$.

Since the elements in the complete residue systems may have very large absolute values, the infinite convolutions generated by a sequence of complete residue systems may not have compact support, see Example \ref{ex_ncpt}. We first provide the sufficient and necessary condition for a special class of infinite convolutions with compact support.

\begin{theorem}\label{consecutive set}
Given a sequence $\{(N_k,B_k)\}_{k=1}^\infty $ such that $B_k=\{0,1,2,\cdots,M_k-1\}$ and $N_k\ge M_k\ge2$ are integers, for all $k\in \N$. Then the infinite convolution $\mu$ is a spectral measure if and only if $M_k|N_k$, for all $k\ge2$.
\end{theorem}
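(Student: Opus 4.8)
The plan is to record first that, since $N_k\ge M_k\ge 2$ and $B_k=\{0,1,\dots,M_k-1\}$, each term $\delta_{(N_1\cdots N_k)^{-1}B_k}$ is supported in an interval of length $(M_k-1)/(N_1\cdots N_k)$, and a telescoping estimate $\sum_{k}(N_k-1)/(N_1\cdots N_k)=1$ gives $\operatorname{supp}\mu\sse[0,1]$; in particular the partial products converge and $\mu$ exists, so the stated equivalence is meaningful. I would then prove the two implications separately, the organizing principle being that the first pair $(N_1,B_1)$ is special: divisibility is required only from $k\ge 2$ on, and one must explain why.

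For sufficiency, assume $M_k\mid N_k$ for all $k\ge 2$ and set $\nu=\delta_{N_2^{-1}B_2}\ast\delta_{(N_2N_3)^{-1}B_3}\ast\cdots$, the infinite convolution of $\{(N_k,B_k)\}_{k\ge 2}$. Comparing Fourier transforms factor by factor yields $\widehat\mu(\xi)=\widehat{\delta_{B_1}}(\xi/N_1)\,\widehat\nu(\xi/N_1)$, so $\mu$ is the rescaling by $N_1^{-1}$ of $\rho:=\delta_{B_1}\ast\nu$; as rescaling preserves spectrality, it suffices to show $\rho$ is spectral. For $k\ge 2$ the pair $(N_k,B_k)$ is an admissible pair of general consecutive sets (take $L_k=\{jN_k/M_k:0\le j<M_k\}\sse\Z$), and since $B_k\sse\{0,\dots,N_k-1\}$ we have $B_{k,2}=\varnothing$, so RBC holds trivially; hence Theorem~\ref{thm_MZ} applies and $\nu$ is spectral, with a spectrum $\Lambda_\nu$ that can be taken inside $\Z$. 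I would then verify that $\Gamma=\tfrac1{M_1}\{0,1,\dots,M_1-1\}+\Lambda_\nu$ is a spectrum of $\rho$: both the mutual orthogonality and the Jorgensen--Pedersen completeness identity reduce, using the $1$-periodicity of $\widehat{\delta_{B_1}}$ together with $\Lambda_\nu\sse\Z$, to the separate facts that $\tfrac1{M_1}\{0,\dots,M_1-1\}$ is a spectrum of $\delta_{B_1}$ and that $\Lambda_\nu$ is a spectrum of $\nu$.

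For necessity, assume $\mu$ is spectral; the goal is $M_k\mid N_k$ for $k\ge 2$. I would obtain this from the general necessary condition of the paper, after checking its two hypotheses in the present setting. Tightness of $\{\nu_{>k}\}$ is immediate, since the telescoping bound above shows every $\nu_{>k}$ is supported in the fixed compact set $[0,1]$. The uniform discrete zero condition holds for consecutive sets because the mask polynomial of $B_k$ is $\frac1{M_k}(1+z+\cdots+z^{M_k-1})$, whose zeros are precisely the nontrivial $M_k$-th roots of unity, a uniformly separated finite family of the required shape. With both hypotheses verified, the general theorem yields $\#B_k=M_k\mid N_k$ for every $k\ge 2$.

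The genuinely delicate points are these. On the sufficiency side, the crucial and slightly surprising move is the rescaling identity that converts the possibly non-divisible first pair into a convolution by the integer-supported measure $\delta_{B_1}$; this is exactly what lets $M_1\mid N_1$ be dropped, and it works only because $\Lambda_\nu$ can be chosen inside $\Z$, so that $1$-periodicity of $\widehat{\delta_{B_1}}$ can be exploited in the completeness computation. Extracting such an integer spectrum from Theorem~\ref{thm_MZ} is therefore the main technical obligation of this direction. On the necessity side the real content lies in the general necessary condition proved elsewhere in the paper; the only local work is matching definitions, and of the two hypotheses the uniform discrete zero condition is the one that must be unwound from its definition, tightness being automatic from the compact support.
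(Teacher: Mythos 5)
Your proposal is correct, and the necessity half is essentially identical to the paper's: verify that $f_{B_k}\equiv 1$ forces the uniform discrete zero condition vacuously, that the common compact support $[0,1]$ gives tightness of $\{\nu_{>k}\}$, and then invoke Theorem~\ref{consecutive digit set}. (One small imprecision: UDZ is a condition on the character polynomial $f_{B_k}$, not on the mask polynomial $\tfrac1{M_k}(1+z+\cdots+z^{M_k-1})$ whose roots you describe; for consecutive sets $f_{B_k}=1$ and the zero set in \eqref{def_UDZ} is empty, which is what makes UDZ hold.) The sufficiency half is where you diverge. The paper simply rescales: by Corollary~\ref{change N} one may replace $N_1$ by $aN_1$ without affecting spectrality, so one may assume $M_1\mid N_1$ as well, at which point every $(N_k,B_k)$ is an admissible pair, RBC holds trivially ($B_{k,2}=\varnothing$), and Theorem~\ref{thm_MZ} applies to the whole sequence at once. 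You instead peel off the first factor, write $\mu$ as a rescaling of $\delta_{B_1}\ast\nu$, and glue a spectrum $\tfrac1{M_1}\{0,\dots,M_1-1\}+\Lambda_\nu$ by hand using $1$-periodicity of $\widehat{\delta_{B_1}}$. Your computation is sound, but it genuinely requires $\Lambda_\nu\subseteq\Z$, which Theorem~\ref{thm_MZ} as stated does not supply; you correctly flag this as the outstanding obligation, and discharging it means going into the construction in \cite{MZ24} (where the spectrum is indeed built from integer translates $\sum N_2\cdots N_{j-1}\ell_j$ with $\ell_j\in L_j\subseteq\Z$). The paper's rescaling route buys you freedom from that obligation entirely; your route buys an explicit spectrum for $\mu$ and makes visible exactly why $N_1$ plays no role.
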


For the first glimpse, it is a little bit of surprise that $N_1$ does not affect the spectrality of the infinite convolution $\mu$. Actually, it is straightforward. Given a real $a\neq 0$, we define a new measure by the infinite convolution $\mu$ such that $\rho(\cdot)=\mu(a\cdot)$. Then we have
\begin{equation*}
\rho
=\delta_{{N'_1}^{-1}B_1}\ast\delta_{(N'_1N_2)^{-1}B_2}\ast\dots\ast\delta_{(N'_1N_2\cdots N_k)^{-1}B_k} *\cdots,
\end{equation*}
where $N'_1=aN_1$. By Corollary \ref{change N},  $(\mu,\Lambda)$ is a spectral pair if and only if $(\rho,a\Lambda)$ is a spectral pair. Therefore the change of  $N_1$ does not affect the spectrality of the infinite convolution $\mu$, and that is the reason for $k\geq 2$ in the Theorem \ref{consecutive set}

The following is an immediate consequence of Theorem \ref{consecutive set}, which was proved by Dai, He and Lau in \cite{Dai-He-Lau-2014}.

\begin{corollary}
Suppose that $(N_k,B_k)=(N,B) $ for all $k\geq 1$ where $B=\{0,1,2,\cdots,M-1\}$ and $N\ge M\ge2$ are integers. Then the infinite convolution $\mu$ is a spectral measure if and only if $M|N$.
\end{corollary}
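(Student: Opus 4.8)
The plan is to deduce this directly from Theorem~\ref{consecutive set} by specializing the general sequence of pairs to the constant one. First I would set $N_k = N$, $M_k = M$, and $B_k = B = \{0,1,2,\cdots,M-1\}$ for every $k \in \N$, and verify that the standing hypotheses of Theorem~\ref{consecutive set} are met: by assumption $N_k = N \geq M = M_k \geq 2$ are integers for all $k$, so the theorem applies verbatim. In particular the infinite convolution $\mu$ exists, since for the single pair $(N,B)$ with $N \geq M$ the digits $B$ are bounded by $M-1 < N$, so $\mu$ is the compactly supported self-similar measure attached to $(N,B)$.

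With the hypotheses checked, Theorem~\ref{consecutive set} asserts that $\mu$ is a spectral measure if and only if $M_k \mid N_k$ for all $k \geq 2$. The only remaining step is to observe that, because the sequence is constant, the infinite family of divisibility conditions $\{M_k \mid N_k\}_{k \geq 2}$ collapses to the single statement $M \mid N$. This yields the claimed equivalence, and there is no genuine obstacle beyond recording this collapse.

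I would add one clarifying remark: the exclusion of the index $k=1$ in Theorem~\ref{consecutive set} is harmless in this setting. Since all pairs coincide, the $k=1$ divisibility condition is automatically the same as the common condition $M \mid N$, which is consistent with the observation made after Theorem~\ref{consecutive set} that the value of $N_1$ alone never influences the spectrality of $\mu$. Thus the corollary recovers precisely the Dai--He--Lau criterion for the self-similar measure generated by $(N,B)$.
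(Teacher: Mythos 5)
Your proposal is correct and follows exactly the paper's route: the paper presents this corollary as an immediate consequence of Theorem~\ref{consecutive set}, obtained by specializing to the constant sequence $(N_k,B_k)=(N,B)$ so that the conditions $M_k\mid N_k$ for $k\ge 2$ collapse to $M\mid N$. Your additional remark about the harmlessness of excluding $k=1$ matches the discussion the paper gives right after Theorem~\ref{consecutive set}.
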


The complete residue systems $B_k$ used above  are very special, but for general cases, the key is to understand the distribution of zero sets of certain polynomials generated by the complete residue systems.  Given a set $B\subset\Z$, we write
$$
b_{*}=\min\{b:b\in B\},
$$
and  we call
\begin{equation}\label{def_fb}
f_B(x)=\frac{\sum_{b\in B} x^b}{ x^{b_{*}} \sum_{k=0}^{\# B-1}x^k}
\end{equation}
the \textit{character polynomial} of $B$.
Given a function $f:\R\to \R$, we write
\begin{equation}\label{def_ZS}
\mathcal{Z}(f)=\{x\in\R:f(x)=0\}
\end{equation}
for the {\it zero set} of $f$.

Let $B$ be a  complete residue system modulo $M$. We say $B$ satisfies {\it  uniform discrete zero condition (UDZ)} if
\begin{equation}\label{def_UDZ}
\mathcal{Z}(f_B\circ e^{-2\pi i x})\subseteq \Big\{\frac{j}{M}:j\in\Z \setminus M\Z \Big\}.
\end{equation}
Note that there are  complete residue systems which do not  satisfy {\it UDZ}, see Example \ref{e-1}.

Let $\mu$ and $\mu_{>k}$ be an infinite convolution given by \eqref{infinite-convolution} and \eqref{def_mugn}.
We write
\begin{equation}\label{nu_n}
\nu_{>k}(\;\cdot\;) = \mu_{>k}\left( \frac{1}{N_1 N_2 \cdots N_k} \; \cdot\; \right).
\end{equation}
In fact, it is equivalent to
$$
\nu_{>k}=\delta_{N_{k+1}^{-1} B_{k+1}} * \delta_{(N_{k+1} N_{k+2})^{-1} B_{k+2}} * \cdots.
$$

Next, we provide a necessary condition for the infinite convolutions generated by complete residue systems satisfying {\it UDZ}.

\begin{theorem}\label{consecutive digit set}
Given $\{(N_k,B_k)\}_{k=1}^\infty $ where $B_k$ is a  complete residue system modulo $M_k$, and $N_k\ge M_k\ge2$ are integers. Let $\mu$ and $\nu_{>k}$ be given by \eqref{infinite-convolution} and \eqref{nu_n}. Suppose that $B_k$ satisfies uniform discrete zero condition for every $k\geq 1$,  and $\{\nu_{>k}\}_{k=1}^\infty$ is {\it tight}. If $\mu$ is a spectral measure, then $M_k|N_k$, for all $k\ge2$.
	\end{theorem}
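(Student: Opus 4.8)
The plan is to argue by contradiction: assuming $M_{k_0}\nmid N_{k_0}$ for some $k_0\ge 2$, I will extract from the global spectrality of $\mu$ a finite divisibility obstruction concentrated at the single scale $k_0$, and then rule it out by a gcd/pigeonhole computation. Throughout I write $P_k=N_1N_2\cdots N_k$ and $m_B(\xi)=\frac{1}{\# B}\sum_{b\in B}e^{-2\pi i b\xi}$, so that $\widehat{\mu}(\xi)=\prod_{k=1}^{\infty}m_{B_k}(\xi/P_k)$ and, for each fixed $k_0$, $\widehat{\mu}=\widehat{\mu_{k_0}}\cdot\widehat{\mu_{>k_0}}$ with $\widehat{\mu_{k_0}}(\xi)=\prod_{j=1}^{k_0}m_{B_j}(\xi/P_j)$ and $\widehat{\mu_{>k_0}}(\xi)=\widehat{\nu_{>k_0}}(\xi/P_{k_0})$. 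Since the value of $N_1$ does not affect spectrality (Corollary~\ref{change N}), I may normalise at the first scale whenever convenient; this is also why the conclusion is stated only for $k\ge 2$, the first step carrying no constraint.

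First I would pin down the zero set of each mask. Writing $b_\ast=\min B_k$, the definition of the character polynomial gives the factorisation $m_{B_k}(\xi)=\frac{1}{M_k}e^{-2\pi i b_\ast\xi}\big(\sum_{l=0}^{M_k-1}e^{-2\pi i l\xi}\big)f_{B_k}(e^{-2\pi i\xi})$. The consecutive factor $\sum_{l=0}^{M_k-1}e^{-2\pi i l\xi}$ vanishes exactly on $\{j/M_k:M_k\nmid j\}$; moreover, because $B_k$ is a complete residue system modulo $M_k$, every nontrivial $M_k$-th root of unity is automatically a genuine zero of $\sum_{b\in B_k}e^{-2\pi i b\xi}$, so these points always lie in $\mathcal Z(m_{B_k})$. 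The uniform discrete zero condition guarantees that the remaining zeros, those contributed by $f_{B_k}$, cannot escape this set. Hence I obtain the clean identity $\mathcal Z(m_{B_k})=\{j/M_k:j\in\Z\setminus M_k\Z\}$, the precise substitute in the general complete-residue-system setting for the transparent zero structure available in Theorem~\ref{consecutive set}.

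Next I would use tightness to localise. A standard consequence of the tightness of $\{\nu_{>k}\}$ is equi-positivity of the tails: there exist $c>0$ and $\delta>0$ with $|\widehat{\nu_{>k}}(\xi)|\ge c$ for $|\xi|\le\delta$ and all $k$. Combined with Step~1, this shows that inside any fixed bounded window the zeros of $\widehat{\mu}$ arise only from finitely many low-scale factors with no cancellation between scales, and that the slowly varying tail $\widehat{\mu_{>k_0}}$ is continuous and nonvanishing near the origin, while $|\widehat{\mu_{k_0}}|^2$ is $P_{k_0}$-periodic. Let $\Lambda\ni 0$ be a spectrum. Orthogonality gives $\Lambda-\Lambda\subseteq\mathcal Z(\widehat{\mu})\cup\{0\}$, and the Jorgensen–Pedersen completeness identity $\sum_{\lambda\in\Lambda}|\widehat{\mu}(\xi+\lambda)|^2\equiv 1$ then lets me organise $\Lambda$ by residues modulo $P_{k_0}$ and, shrinking the window via equi-positivity, distil a finite statement at scale $k_0$. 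Orthogonality alone, via the clean zero set, forces any $M_{k_0}$ relevant frequencies to have pairwise differences $\delta$ with $M_{k_0}\delta/P_{k_0}\in\Z\setminus M_{k_0}\Z$; writing $d=\gcd(M_{k_0},P_{k_0})$ one checks these frequencies are pairwise distinct modulo $d$, whence $M_{k_0}\le d$ and $M_{k_0}\mid P_{k_0}$. This divisibility into the full product is not yet the claim: completeness is needed to pin the exact $M_{k_0}$-fold growth of the spectral count in passing from scale $k_0-1$ to scale $k_0$, so that the room supplied by the single factor $N_{k_0}$ must absorb $M_{k_0}$, yielding $M_{k_0}\mid N_{k_0}$ and the contradiction.

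The main obstacle is precisely this relative, cross-scale counting. Because the elements of the $B_k$ may be large, $\mu$ need not be compactly supported, so the density and lattice-tiling arguments that usually convert global spectrality into a finite Hadamard condition at one scale are unavailable; tightness is the substitute for compactness, producing the uniform lower bound on the tails that legitimises passing from $\sum_{\lambda}|\widehat{\mu}(\xi+\lambda)|^2\equiv 1$ to a genuinely finite-dimensional orthogonality and completeness statement modulo $P_{k_0}$, while the uniform discrete zero condition is what keeps $\mathcal Z(\widehat{\mu})$ lattice-like enough for the gcd bookkeeping to close. Controlling the interaction between the $P_{k_0}$-periodic head $\widehat{\mu_{k_0}}$ and the merely continuous, non-periodic tail $\widehat{\mu_{>k_0}}$—and thereby isolating the scale-$k_0$ growth factor from the cumulative $P_{k_0}$—is the delicate point I expect to absorb most of the work.
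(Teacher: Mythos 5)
Your preliminary steps match the paper's: the identity $\mathcal Z(\mathcal M_{B_k})=\{j/M_k:j\in\Z\setminus M_k\Z\}$ under \emph{UDZ} is Lemma~\ref{function}, and the equicontinuity/equi-positivity of the tails $\widehat{\nu_{>k}}$ near $0$ coming from tightness is Lemma~\ref{equicontinuous} and Lemma~\ref{zero set}, giving $\mathcal Z(\widehat\mu)=\bigcup_k N_1\cdots N_k\,\mathcal Z(\mathcal M_{B_k})\subseteq\Q$. But the core of the argument is missing, and the part you do sketch does not close. First, your gcd/pigeonhole step presupposes $M_{k_0}$ spectrum elements whose pairwise differences lie specifically in the scale-$k_0$ piece $P_{k_0}\mathcal Z(\mathcal M_{B_{k_0}})$ of the zero set; orthogonality only places differences somewhere in the union over all scales, and you give no mechanism for producing such a configuration. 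Second, even granting it, your stated output is $M_{k_0}\mid P_{k_0}=N_1\cdots N_{k_0}$, which is much weaker than $M_{k_0}\mid N_{k_0}$, and the upgrade — which you describe as ``pinning the exact $M_{k_0}$-fold growth of the spectral count'' and candidly flag as the delicate point absorbing most of the work — is precisely the theorem. A proposal whose acknowledged hard step coincides with the statement to be proved is not a proof.

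For comparison, the paper does not run a counting argument at all. It proves (Theorem~\ref{new spectrum}) that spectrality descends: from a spectrum $\Lambda\ni 0$ of $\mu$ one builds spectra $\Lambda_{\mathbf i}=\bigcup_k\big(\tfrac{i_k}{N_1}+P(i_k)\big)$ of $\nu=\nu_{>1}$, using the decomposition of $[0,N_1)\cap\Q$ into translates of $\Gamma_0=\{jN_1/M_1\}$ and a maximization argument on $q_\gamma$ to transfer completeness. The crucial by-products are (i) Corollary~\ref{cor_P(0)}: every residue $jN_2/M_2$ in the canonical dual set at scale $2$ is actually occupied by $\Lambda_{\mathbf i}$, and (ii) the structural containment $\Lambda_{\mathbf i}\subseteq\bigcup_j(\gamma_j/N_1+\Z)$ with $\gamma_j\in[0,N_1/M_1)$, so every element of $\Lambda_{\mathbf i}$ has fractional part below $1/M_1\le 1/2$. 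If $M_2\nmid N_2$, the occupied residue $\tfrac{(m-1)N_2}{M_2}$ with $m=M_2/\gcd(N_2,M_2)\ge 2$ would force a fractional part $\tfrac{m-1}{m}\ge\tfrac12$, a contradiction; induction on the peeled tails then gives all $k\ge 2$. It is this interplay between occupancy of the canonical residues and the smallness of the fractional parts inherited from the previous scale — not a divisibility count modulo $P_{k_0}$ — that isolates the single factor $N_{k_0}$. You would need to supply an argument of this kind (or an equivalent descent mechanism) before your outline becomes a proof.
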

	
	We may find some special complete residue systems satisfying {\it UDZ}, and  we have the following conclusion.
	
\begin{corollary}\label{f}
Given a sequence $\{(N_k,B_k)\}_{k=1}^\infty $ such that $B_k$ is a  complete residue system modulo $M_k$ with character polynomial
$$
f_{B_k}(x)=\sum_{i=0}^{2n_k}(-x)^i
$$
satisfying $4n_k+2|M_k$ and $N_k\ge M_k\ge2$ are integers, for all $k\in \N$. Suppose that $\{\nu_{>k}\}_{k=1}^\infty$ is tight. If the infinite convolution $\mu$ is a spectral measure, then $M_k|N_k$, for all $k\ge2$.
\end{corollary}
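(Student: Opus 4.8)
The plan is to deduce the corollary directly from Theorem~\ref{consecutive digit set}: I would check that each $B_k$ with the prescribed character polynomial satisfies the uniform discrete zero condition, and then invoke the theorem. Once \emph{UDZ} holds for every $k$, all three hypotheses of Theorem~\ref{consecutive digit set} are in place (every $B_k$ satisfies \emph{UDZ}, $\{\nu_{>k}\}_{k=1}^\infty$ is tight, and $\mu$ is spectral), and the conclusion $M_k\mid N_k$ for all $k\ge 2$ follows immediately. So the entire content of the proof is the verification of \emph{UDZ}.

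First I would put the character polynomial into closed form. Since $2n_k+1$ is odd, the finite geometric series gives
$$
f_{B_k}(x)=\sum_{i=0}^{2n_k}(-x)^i=\frac{1-(-x)^{2n_k+1}}{1-(-x)}=\frac{1+x^{2n_k+1}}{1+x}\qquad(x\neq -1),
$$
while $f_{B_k}(-1)=\sum_{i=0}^{2n_k}1=2n_k+1\neq 0$, so $x=-1$ is not a zero. Hence the zeros of $f_{B_k}$ are precisely the roots of $1+x^{2n_k+1}=0$ other than $x=-1$. Substituting $x=e^{-2\pi i t}$, the condition $f_{B_k}(e^{-2\pi i t})=0$ forces $e^{-2\pi i(2n_k+1)t}=-1$, which is equivalent to $(2n_k+1)t\in\tfrac12+\Z$, i.e. $t=\dfrac{j}{4n_k+2}$ for some odd integer $j$. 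Therefore
$$
\mathcal{Z}\big(f_{B_k}\circ e^{-2\pi i x}\big)\subseteq \Big\{\frac{j}{4n_k+2}: j\in\Z,\ j\text{ odd}\Big\}.
$$

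Next I would verify the containment demanded by \emph{UDZ} using the divisibility $4n_k+2\mid M_k$. Writing $M_k=(4n_k+2)q$ with $q\in\N$, any candidate point becomes $\dfrac{j}{4n_k+2}=\dfrac{jq}{M_k}$. If $M_k\mid jq$, then after cancelling $q$ we would obtain $(4n_k+2)\mid j$, which is impossible because $j$ is odd while $4n_k+2$ is even. Hence $M_k\nmid jq$, so every element of $\mathcal{Z}(f_{B_k}\circ e^{-2\pi i x})$ lies in $\{j'/M_k: j'\in\Z\setminus M_k\Z\}$; that is, $B_k$ satisfies \emph{UDZ}. Since $k$ is arbitrary, every $B_k$ satisfies \emph{UDZ}, and Theorem~\ref{consecutive digit set} yields $M_k\mid N_k$ for all $k\ge 2$.

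The computation is elementary throughout, so there is no serious obstacle; the only point requiring a little care is the excluded root $x=-1$ (equivalently the removed points $t\in\tfrac12+\Z$). However, these points already lie in the target set $\{j'/M_k: M_k\nmid j'\}$, so whether or not one retains them does not affect the inclusion, which keeps the argument clean. Thus the crux is simply the bookkeeping that identifies the zero set, after which the conclusion rests on the parity observation that an odd integer $j$ can never be divisible by the even number $4n_k+2$.
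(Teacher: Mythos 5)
Your proposal is correct and follows the same route as the paper: write $f_{B_k}(x)=\frac{1+x^{2n_k+1}}{1+x}$, deduce $\mathcal{Z}(f_{B_k}\circ e^{-2\pi i x})\subseteq\{(2j+1)/(4n_k+2):j\in\Z\}\subseteq\{j/M_k:j\in\Z\setminus M_k\Z\}$ via the parity argument and $4n_k+2\mid M_k$, and then apply Theorem~\ref{consecutive digit set}. Your write-up merely fills in the details (the excluded root $x=-1$ and the divisibility bookkeeping) that the paper labels ``it is clear that.''
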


Finally,  we provide a sufficient and necessary condition for a special class of infinite convolutions. 
Actually, a more general  result is provided in Section \ref{sec_noncpt}, see Theorem \ref{thm_necessary}, and for simplicity, we state the following simple version, which are easy  to construction examples of spectral measures without compact support.   
\begin{theorem}\label{thm_consecutive}
Given $\{(N_k,B_k)\}_{k=1}^\infty $ where $B_k=\{0,1,2,\cdots,M_k-2,n_kM_k-1\}$ and $N_k\ge M_k\ge 3$ and $n_k\geq 0$ are integers for every $k\in \N$. Suppose that $\{M_k\}$ are odd and $\sum_{k=1}^{\f}\frac{1}{M_k}<\f$.  Then $\mu$ is a spectral measure if and only if $M_k|N_k$, for all $k\ge2$.
\end{theorem}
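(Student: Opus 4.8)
The plan is to treat the two implications separately: necessity will follow from the general necessary condition in Theorem \ref{consecutive digit set} once I verify its hypotheses, while sufficiency will be obtained by a direct spectrum construction after reducing to the case $M_k\mid N_k$ for every $k$.

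For the necessity, I first record that each $B_k=\{0,1,\dots,M_k-2,n_kM_k-1\}$ is a complete residue system modulo $M_k$: the block $\{0,\dots,M_k-2\}$ realizes the residues $0,\dots,M_k-2$, and $n_kM_k-1\equiv -1\equiv M_k-1\pmod{M_k}$ supplies the last one. The real work is to verify the uniform discrete zero condition \eqref{def_UDZ} for every $B_k$, and here the hypothesis that $M_k$ is odd is essential. Writing $z=e^{-2\pi i x}$ and using \eqref{def_fb}, one finds that away from the $M_k$-th roots of unity (which correspond exactly to the allowed values $x\in\{j/M_k:M_k\nmid j\}$) the zeros of $f_{B_k}$ coincide with those of the numerator $P(z)=\sum_{j=0}^{M_k-2}z^{j}+z^{n_kM_k-1}$. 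I would then show that any unit-circle zero of $P$ must be an $M_k$-th or an $(M_k-2)$-th root of unity: equating moduli in $\tfrac{z^{M_k-1}-1}{z-1}=-z^{n_kM_k-1}$ forces $|\sin\tfrac{(M_k-1)\theta}{2}|=|\sin\tfrac{\theta}{2}|$ for $z=e^{i\theta}$, whence $z^{M_k}=1$ or $z^{M_k-2}=1$. On an $(M_k-2)$-th root of unity $P(z)=1+z^{n_kM_k-1}$, so $P(z)=0$ would force $z^{n_kM_k-1}=-1$; raising to the $(M_k-2)$-th power gives $(-1)^{M_k-2}=1$, impossible because $M_k-2$ is odd. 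Hence the only unit-circle zeros of $f_{B_k}$ are $M_k$-th roots of unity, i.e.\ UDZ holds. (The degenerate cases $n_k\in\{0,1\}$ give $f_{B_k}\equiv 1$ and are trivial.)

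Next I would deduce tightness of $\{\nu_{>k}\}$ from $\sum_k 1/M_k<\infty$. Realizing $\nu_{>k}$ as the law of $X=\sum_{j\ge 1}Y_{k+j}/(N_{k+1}\cdots N_{k+j})$ with independent $Y_{k+j}$ uniform on $B_{k+j}$, I split each digit into its small values $\{0,\dots,M_{k+j}-2\}$ and the single large value $n_{k+j}M_{k+j}-1$, which carries probability $1/M_{k+j}$. On the event that no large digit occurs, $M_i\le N_i$ and $N_i\ge 3$ give $|X|\le\sum_{j\ge1}3^{-(j-1)}=3/2$ uniformly in $k$, while the probability of at least one large digit is at most $S_k:=\sum_{i>k}1/M_i\to 0$. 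Thus for $R>3/2$ we get $\nu_{>k}(\R\setminus[-R,R])\le S_k$, and choosing $R$ to dominate the finitely many measures with $S_k\ge\varepsilon$ yields a single interval working for all $k$, so $\{\nu_{>k}\}$ is tight (the same estimate also shows $\mu$ exists). With complete residue system, UDZ and tightness in hand, Theorem \ref{consecutive digit set} gives $M_k\mid N_k$ for all $k\ge 2$.

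For the sufficiency, assume $M_k\mid N_k$ for all $k\ge 2$. Since by Corollary \ref{change N} the value of $N_1$ does not affect spectrality, I may rescale by $a=M_1/N_1$ and assume $M_k\mid N_k$ for every $k\ge 1$. Then each $(N_k,B_k)$ is an admissible pair: taking $L_k=\tfrac{N_k}{M_k}\{0,1,\dots,M_k-1\}$ and using that $B_k$ is a complete residue system modulo $M_k$, the sum $\sum_{b\in B_k}e^{-2\pi i b(\ell-\ell')/N_k}$ collapses to $\sum_{r=0}^{M_k-1}e^{-2\pi i r(j-j')/M_k}=M_k\,\delta_{j,j'}$, so the defining matrix is unitary. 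I then propose the candidate spectrum $\Lambda=\bigcup_{k\ge1}\big(L_1+N_1L_2+\cdots+(N_1\cdots N_{k-1})L_k\big)$; orthonormality of $\{e_\lambda\}_{\lambda\in\Lambda}$ in $L^2(\mu)$ follows from the unitarity above. \textbf{The main obstacle is completeness}, i.e.\ establishing $\sum_{\lambda\in\Lambda}|\wh\mu(\xi+\lambda)|^2\equiv 1$ through the Jorgensen--Pedersen criterion. The plan is to obtain this by showing that the tails $\{\nu_{>k}\}$ form an \emph{equi-positive} family: UDZ confines the common zeros of the masks to the discrete set $\{j/M_k\}$, and tightness yields uniform lower bounds for $|\wh{\nu_{>k}}|$ on compact sets, so no mass escapes to infinity in the limit. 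This equi-positivity argument, carried out in the generality of Section \ref{sec_noncpt} (Theorem \ref{thm_necessary}), is the delicate step and is precisely where the unbounded support must be handled with care.
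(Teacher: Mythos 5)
Your necessity argument is sound and essentially complete: you correctly reduce to Theorem \ref{consecutive digit set} by checking that each $B_k$ is a complete residue system modulo $M_k$, that $\{\nu_{>k}\}$ is tight (your direct estimate is just Lemma \ref{lem_tight} specialized to this family), and that each $B_k$ satisfies UDZ. Your UDZ verification is in fact cleaner than the paper's: taking moduli in $\frac{z^{M_k-1}-1}{z-1}=-z^{n_kM_k-1}$ to force $z^{M_k}=1$ or $z^{M_k-2}=1$, and then killing the $(M_k-2)$-th roots of unity by raising $z^{n_kM_k-1}=-1$ to the $(M_k-2)$-th power (using that $M_k-2$ is odd), is a tidy substitute for the paper's case analysis in Theorem \ref{thm_necessary}, which instead tracks congruences $j_1/(n_kM_k)=j_2/((n_k-1)M_k)$ etc.\ and needs an extra $\gcd$ hypothesis when $M_k$ is even. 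Since $\gcd(M_k,M_k-2)=1$ for odd $M_k$, your two cases only overlap at $z=1$, so the argument closes.

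The sufficiency direction, however, has a genuine gap. After reducing to $M_k\mid N_k$ for all $k$ and writing down the natural candidate spectrum, you correctly identify completeness ($Q_{\mu,\Lambda}\equiv 1$) as the crux, but the mechanism you propose does not work as stated: tightness of $\{\nu_{>k}\}$ gives equicontinuity of $\{\hat{\nu}_{>k}\}$ and a uniform lower bound only in a neighborhood of $0$ (this is exactly how Lemma \ref{zero set} uses it); it does \emph{not} give uniform lower bounds for $|\hat{\nu}_{>k}|$ on compact sets, since each $\hat{\nu}_{>k}$ genuinely vanishes at points of any fixed interval of length $\ge N_{k+1}/M_{k+1}$. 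The equi-positivity one needs is the stronger property that for each $x$ in a compact set there is an integer translate $x+k_x$ on which all $|\hat{\nu}_{>n}|$ are uniformly bounded below, and establishing this for unbounded digit sets is precisely the hard step. The paper does not prove it here either: it verifies the remainder bounded condition and then checks that a subsequence of $\{(N_k,B_k)\}$ satisfies the partial concentration condition (splitting into the cases $M_k/N_k\ge\tfrac12$ infinitely often versus $M_k/N_k<\tfrac12$ infinitely often), and invokes Theorem \ref{gap-condition} from \cite{MZ24}. To complete your proof you would either need to reproduce that PCC verification and cite Theorem \ref{gap-condition}, or supply an actual equi-positivity argument; as written, the sufficiency half rests on an unproved (and, in the form stated, false) claim.
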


In the end,  we give some examples to illustrate our conclusion. In the first example, we provide a special infinite convolution which has non-compact support, and  the sufficient and necessary condition for spectrality is applied. 

\begin{example}\label{ex_ncpt}
For each $k\geq 1$,  suppose that $N_k\geq (2k+1)^2$ is an integer,
and
$$
B_k=\set{ 0,1,\cdots, (2k+1)^2-2, (2k+1)^2 -1 + (2k+1)^2N_1 N_2 \cdots N_k }.
$$
		
Since  $B_{k,2}=\{(2k+1)^2 -1 + (2k+1)^2N_1 N_2 \cdots N_k\}$, we have
\[
\sum_{k=1}^{\infty} \frac{\# B_{k,2}}{\# B_k} = \sum_{k=1}^{\infty} \frac{1}{(2k+1)^2}<  \infty,
\]
and it implies that  $\{( N_k,B_k)\}_{k=1}^\infty $ satisfies {\it RBC}, by Theorem \ref{existence}, the infinite convolution $\mu$ exists.
And by Theorem \ref{thm_consecutive}, $\mu$ is a spectral measure if and only if $(2k+1)^2|N_k$, for all $k\ge2$.

Since
$$
\sum_{k=1}^{\f} \frac{\max\set{b: b \in B_{k}}}{N_1 N_2 \cdots N_{k}} =\sum_{k=1}^{\f} \left( \frac{(2k+1)^2 -1}{N_1 N_2 \cdots N_{k}} + (2k+1)^2 \right) =\f,
$$
it is clear that  $\mu([0,n])<1$  for all $n\in \N$.  	Therefore  the  measure $\mu$ is not compactly supported.
\end{example}

There exist some complete residue systems which do not satisfy uniform discrete zero condition. In the next example, we provide two complete residue systems with the same character polynomial, but one of them satisfies uniform discrete zero condition, the other does not.

\begin{example}\label{e-1}
Let $B_1=\{0,2,4\}$, $B_2=\{0,2,3,4,5,7\}$. Then $B_1$ is a complete residue system  modulo $3$, and $B_2$ is a  complete residue system modulo $6$. Moreover, their character polynomials are identical, that is,
$$
f_{B_1}(x)=f_{B_2}(x)=1-x+x^2.
$$
Immediately, we have that
$$
\mathcal{Z}(f_{B_1}\circ e^{-2\pi i x})=\mathcal{Z}(f_{B_2}\circ e^{-2\pi i x})=\big\{\pm\frac{1}{6}+k:k\in\Z\big\}.
$$
It is clear that  $B_1$ does not satisfy uniform discrete zero condition, but $B_2$ satisfies  uniform discrete zero condition.

Given a sequence $\{(N_k,B_k)\}_{k=1}^\infty $ such that
$$
B_k=\{2,3,\cdots,M_k-1\}\cup\{0,M_k+1\},
$$
$N_k\ge M_k\ge2$ are integers and $6|M_k$ for all $k\in \N$. It is clear that
$$
f_{B_k}(x)=1-x+x^2.
$$
If the corresponding infinite convolution $\mu$ is a spectral measure, by Corollary \ref{f}, we have that  $M_k|N_k$, for all $k\ge2$.

\end{example}

It happens that infinite convolutions are not spectral measures. In the next example, we
show that the infinite convolutions are not spectral measures even if it is generated by a sequence of complete residue systems.

\begin{example}\label{ex_NE}
For each integer $n\geq 1$, let $N_n =2$ and $B_n=\set{ 0,1 } \text{or} \set{0,3}	.$ It is clear that  $B_n$ is a complete residue system modulo $2$, and $\{( 2,B_n)\}_{n=1}^\infty $ is a sequence of admissible pairs. If
$$
0<\#\{n\in\N:B_n=\set{ 0,1 }\}<\f,
$$
then the infinite convolution $\mu$ is not a spectral measure.

Since $0<\#\{n\in\N:B_n=\set{ 0,1 }\}<\f$, there exists $n_0\in\N$ such that $B_{n_0}=\set{ 0,1 }$ and $B_n=\set{ 0,3}$, for all $n>n_0$.  We have that
$$
\nu_{>n_0-1}=\delta_{\frac{1}{2}\set{ 0,1 }}\ast\delta_{\frac{1}{4}\set{ 0,3 }}\ast\delta_{\frac{1}{8}\set{ 0,3 }}\ast\cdots=\frac{1}{3}\mbb{L}|_{[0,2]}+\frac{1}{3}\mbb{L}|_{[\frac{1}{2},\frac{3}{2}]}.
$$
Hence $\nu_{>n_0-1}$is absolutely continuous with respect to Lebesgue measures, but it is  not uniformly distributed on its support.

Given an admissible pair $(N,B)$, we write $\rho(\cdot)=\nu_{>n_0-1}(N\cdot)$. Obviously, $\delta_{N^{-1}B}\ast\rho$ is still absolutely continuous but not uniform on its support. Thus $\mu$ is absolutely continuous but not uniform on its support.

However, absolutely continuous spectral measures are  uniform distributed on their support, see \cite{Dutkay-Lai-2014} for details. Therefore, $\mu$ is not a spectral measure.
\end{example}

\section{Properties of spectrum and  character polynomials} \label{sec_Pre}
	
We use $\mcal{P}(\R^d)$ to denote the set of all Borel probability measures on $\R^d$.
For $\mu \in \mcal{P}(\R^d)$, the \emph{Fourier transform} of $\mu$ is given by
$$
\wh{\mu}(\xi) = \int_{\R^d} e^{-2\pi i \xi \cdot x} \D \mu(x).
$$
For a set $B\subset \Z$, we write $\mathcal{M}_B(\xi)$ for the Fourier transform of the discrete measure $\delta_B$, that is,
\begin{equation}\label{def_FMB}
\mathcal{M}_B(\xi)=\frac{1}{\# B}\sum_{b\in B}e^{-2\pi i b \xi}.
\end{equation}
Given a Borel probability measure $\mu$ and a subset $\Lambda\subseteq\R$, we write
\begin{align*}
Q_{\mu,\Lambda}(\xi)=\sum_{\lambda\in \Lambda}|\hat{\mu}(\xi+\lambda)|^2,
\end{align*}
if $\Lambda=\emptyset$, we define $Q_{\mu,\Lambda}(\xi)=0$, for all $\xi\in\R$.
The following theorem is often used to verify the spectrality of measures, see~\cite{Jorgensen-Pedersen-1998} for the proofs.
\begin{theorem}\label{Q}\cite{Jorgensen-Pedersen-1998}
Let $\mu$ be a probability measure on $\R$, $\Lambda\subseteq\R$. Then

\noindent (i) the set $\{e^{-2\pi i \lambda\cdot x}:\lambda\in\Lambda\}$ is an orthonormal set in $L^2(\mu)$ if and only if $\hat{\mu}(\lambda_1-\lambda_2)=0$ for all $\lambda_1\ne\lambda_2\in\Lambda$.

\noindent (ii) the set $\{e^{-2\pi i \lambda\cdot x}:\lambda\in\Lambda\}$ is an  orthonormal set in $L^2(\mu)$ if and only if $Q_{\mu,\Lambda}(\xi)\le1$ for all $\xi\in\R$.
		
\noindent (iii) the set $\{e^{-2\pi i \lambda\cdot x}:\lambda\in\Lambda\}$ is an  orthonormal basis in $L^2(\mu)$ if and only if $Q_{\mu,\Lambda}(\xi)\equiv1$ for all $\xi\in\R$.
\end{theorem}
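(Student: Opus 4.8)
The plan is to reduce all three parts to the single elementary identity
\[
\langle e_{\lambda_1}, e_{\lambda_2}\rangle_{L^2(\mu)} = \int_\R e^{-2\pi i (\lambda_1 - \lambda_2) x}\D\mu(x) = \hat{\mu}(\lambda_1 - \lambda_2),
\]
together with two standard Hilbert-space facts: Bessel's inequality, and the characterization of an orthonormal basis as an orthonormal system for which Parseval's identity holds on a total set. Since $\mu$ is a probability measure, $\hat{\mu}(0)=1$, so each $e_\lambda$ is automatically a unit vector in $L^2(\mu)$. Consequently statement (i) is immediate: orthonormality of the family means precisely that $\langle e_{\lambda_1},e_{\lambda_2}\rangle=\hat{\mu}(\lambda_1-\lambda_2)=0$ for all $\lambda_1\neq\lambda_2$.

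For (ii), I would first record that, by the reality of $\mu$ (so $\hat{\mu}(-t)=\overline{\hat{\mu}(t)}$), one has $|\hat{\mu}(\xi+\lambda)|^2=|\hat{\mu}(-\xi-\lambda)|^2=|\langle e_{-\xi},e_\lambda\rangle|^2$, whence $Q_{\mu,\Lambda}(\xi)=\sum_{\lambda\in\Lambda}|\langle e_{-\xi},e_\lambda\rangle|^2$. If the family is orthonormal, Bessel's inequality applied to the unit vector $e_{-\xi}$ yields $Q_{\mu,\Lambda}(\xi)\le 1$ for every $\xi$. For the converse, assume $Q_{\mu,\Lambda}(\xi)\le 1$ for all $\xi$ and fix $\lambda_0\in\Lambda$; evaluating at $\xi=-\lambda_0$, the term $\lambda=\lambda_0$ already contributes $|\hat{\mu}(0)|^2=1$, so all remaining nonnegative terms must vanish, forcing $\hat{\mu}(\lambda-\lambda_0)=0$ for every $\lambda\neq\lambda_0$. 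By part (i) this is exactly orthonormality.

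For (iii), I would use that an orthonormal family is a basis if and only if Parseval's identity holds for all $f\in L^2(\mu)$, equivalently for all $f$ in a dense subspace. The forward direction is immediate: an orthonormal basis gives Parseval for each unit vector $e_{-\xi}$, which reads $Q_{\mu,\Lambda}(\xi)\equiv 1$. For the converse, $Q\equiv 1$ first gives orthonormality via (ii), and then Parseval holds for every $e_{-\xi}$, hence for all finite linear combinations of exponentials; completeness follows once these combinations are shown to be dense. The one genuinely nontrivial ingredient—and the step I expect to be the main obstacle—is precisely this density, namely $\overline{\mathrm{span}}\{e_\xi:\xi\in\R\}=L^2(\mu)$. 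I would settle it through the uniqueness theorem for Fourier transforms of finite complex measures: if $g\in L^2(\mu)$ is orthogonal to every $e_\xi$, then the finite measure $g\D\mu$ has identically vanishing Fourier transform and is therefore the zero measure, so $g=0$ $\mu$-almost everywhere. It is worth emphasizing that this argument relies only on the finiteness of $\mu$ and not on compact support, which is the relevant point here since the infinite convolutions studied in this paper need not be compactly supported.
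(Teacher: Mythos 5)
Your proposal is correct, but note that the paper contains no internal proof of this statement to compare against: Theorem \ref{Q} is quoted from Jorgensen--Pedersen \cite{Jorgensen-Pedersen-1998} with the proof deferred to that reference. Your argument is the standard one from that source, and all three steps check out: (i) is the identity $\langle e_{\lambda_1},e_{\lambda_2}\rangle_{L^2(\mu)}=\hat{\mu}(\lambda_1-\lambda_2)$ together with $\hat{\mu}(0)=1$; (ii) is Bessel's inequality for the unit vector $e_{-\xi}$ (using $|\hat{\mu}(\xi+\lambda)|=|\hat{\mu}(-\xi-\lambda)|$, valid since $\mu$ is a real measure), with the converse correctly obtained by evaluating $Q_{\mu,\Lambda}$ at $\xi=-\lambda_0$, where the diagonal term already contributes $1$; and (iii) reduces to the totality of $\{e_\xi:\xi\in\R\}$ in $L^2(\mu)$, which your Fourier-uniqueness argument (the finite complex measure $\ol{g}\D\mu$ has vanishing transform, hence is zero) settles completely. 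Two remarks. First, your density argument is actually \emph{more} robust than what the paper itself uses later: in the proof of Corollary \ref{Q'} the authors invoke the Stone--Weierstrass theorem, which presupposes a compact underlying space, whereas the infinite convolutions in this paper may have unbounded support; your argument needs only that $\mu$ is finite (so $g\in L^2(\mu)\subseteq L^1(\mu)$), which is precisely the right level of generality here. Second, a small phrasing point in (iii): Parseval holding for each $e_{-\xi}$ does not formally propagate to ``all finite linear combinations'' term by term; the clean statement is that for an orthonormal family the set of vectors satisfying Parseval is exactly $\overline{\mathrm{span}}\{e_\lambda:\lambda\in\Lambda\}$ (the range of the orthogonal projection onto it, hence a closed subspace), so $Q\equiv 1$ places every exponential in that closed span and totality finishes the proof --- which is what your argument in substance does.
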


For simplicity, we say $\Lambda$ is {\it an orthogonal set} of $\mu$ if $\{e^{-2\pi i \lambda\cdot x}:\lambda\in\Lambda\}$ is an orthonormal set in $L^2(\mu)$. The conclusion (iii) in the above theorem may be relaxed to verify $Q_{\mu,\Lambda}(\xi)\equiv1$ for all $\xi$ in a dense subset of $\R$.

\begin{corollary}\label{Q'}
Let $\mu$ be a probability measure on $\R$, and let $\Lambda\subseteq\R$ be an orthogonal set of $\mu$. Then the set $\{e^{-2\pi i \lambda\cdot x}:\lambda\in\Lambda\}$ is an orthonormal basis in $L^2(\mu)$ if and only if $Q_{\mu,\Lambda}(\xi)\equiv1$ for all $\xi\in D$, where $D$ is a dense subset of $\R$.
\end{corollary}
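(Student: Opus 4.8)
The plan is to derive the corollary from part~(iii) of Theorem~\ref{Q} by upgrading the hypothesis ``$Q_{\mu,\Lambda}\equiv 1$ on a dense set $D$'' to ``$Q_{\mu,\Lambda}\equiv 1$ on all of $\R$''. The ``only if'' direction is immediate: if $\set{e^{-2\pi i\lambda\cdot x}:\lambda\in\Lambda}$ is an orthonormal basis, then Theorem~\ref{Q}(iii) gives $Q_{\mu,\Lambda}(\xi)=1$ for every $\xi\in\R$, in particular for $\xi\in D$. For the converse the single substantive point is that $Q_{\mu,\Lambda}$ is a \emph{continuous} function on $\R$. Granting this, the set $\set{\xi\in\R:Q_{\mu,\Lambda}(\xi)=1}$ is closed and contains the dense set $D$, hence equals $\R$, and Theorem~\ref{Q}(iii) finishes the proof.

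To prove continuity I would first record a projection identity. Write $e_\xi(x)=e^{-2\pi i\xi x}$ and use the inner product $\la f,g\ra_\mu=\int_\R f\ol{g}\D\mu$ on $L^2(\mu)$. Then
\[
\hat\mu(\xi+\lambda)=\int_\R e^{-2\pi i(\xi+\lambda)x}\D\mu(x)=\la e_\xi,e_{-\lambda}\ra_\mu,
\]
so that $Q_{\mu,\Lambda}(\xi)=\sum_{\lambda\in\Lambda}|\la e_\xi,e_{-\lambda}\ra_\mu|^2$. Since $\Lambda$ is an orthogonal set, Theorem~\ref{Q}(i) gives $\la e_{-\lambda_1},e_{-\lambda_2}\ra_\mu=\hat\mu(\lambda_2-\lambda_1)=0$ for $\lambda_1\ne\lambda_2$, so the family $\set{e_{-\lambda}:\lambda\in\Lambda}$ is itself orthonormal in $L^2(\mu)$. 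Letting $P$ be the orthogonal projection onto $V=\ol{\mathrm{span}}\,\set{e_{-\lambda}:\lambda\in\Lambda}$, Bessel's identity yields $Q_{\mu,\Lambda}(\xi)=\|Pe_\xi\|_\mu^2=\la Pe_\xi,e_\xi\ra_\mu$.

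It then remains to note that $\xi\mapsto e_\xi$ is continuous from $\R$ into $L^2(\mu)$: indeed $\|e_\xi-e_\eta\|_\mu^2=\int_\R|e^{-2\pi i\xi x}-e^{-2\pi i\eta x}|^2\D\mu(x)\to 0$ as $\eta\to\xi$ by dominated convergence, the integrand being bounded by $4$ and tending to $0$ pointwise while $\mu$ is finite. Composing this continuous map with the bounded operator $P$ and the (jointly continuous) inner product shows $\xi\mapsto\la Pe_\xi,e_\xi\ra_\mu=Q_{\mu,\Lambda}(\xi)$ is continuous, which is exactly what the reduction above requires.

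The main (and essentially the only) obstacle is this continuity; a naive attempt using only lower semicontinuity fails, since $Q_{\mu,\Lambda}$, being an increasing limit of the continuous finite partial sums, is automatically lower semicontinuous, yet a lower semicontinuous function bounded by $1$ and equal to $1$ on a dense set need only satisfy $Q<1$ on a meager set, not the empty set. The projection identity $Q_{\mu,\Lambda}(\xi)=\|Pe_\xi\|_\mu^2$ is precisely what supplies genuine continuity rather than mere semicontinuity, and it also affords a direct alternative bypassing Theorem~\ref{Q}(iii): $Q_{\mu,\Lambda}(\xi)=1$ forces $e_\xi\in V$, so if this holds for all $\xi\in D$ then the closed subspace $V$ contains the closed span of $\set{e_\xi:\xi\in D}$, which is all of $L^2(\mu)$ (the exponentials $\set{e_\xi:\xi\in\R}$ have dense span, as any $f$ orthogonal to all of them has vanishing Fourier transform, and continuity of $\xi\mapsto e_\xi$ passes this from $\R$ to the dense subset $D$); hence $\set{e_{-\lambda}:\lambda\in\Lambda}$, and therefore $\set{e^{-2\pi i\lambda\cdot x}:\lambda\in\Lambda}$, is an orthonormal basis.
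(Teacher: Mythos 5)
Your proposal is correct, and your primary argument takes a genuinely different route from the paper's. The paper argues directly with completeness: for each $\xi\in D$, the equality $Q_{\mu,\Lambda}(\xi)=1=\|e_\xi\|^2$ is the case of equality in Bessel's inequality for the orthonormal family $\{e_{-\lambda}\}$, so $e_\xi$ lies in $V=\overline{\mathrm{span}}\{e_{-\lambda}:\lambda\in\Lambda\}$; it then invokes Stone--Weierstrass to conclude that $\overline{\mathrm{span}}\{e_\xi:\xi\in D\}=L^2(\mu)$, whence $V=L^2(\mu)$. This is exactly the ``direct alternative'' you sketch in your last paragraph, except that you justify the density of the exponentials more carefully (via vanishing of the Fourier transform of $f\,\mathrm{d}\mu$ and continuity of $\xi\mapsto e_\xi$ to pass from $\R$ to $D$) --- a welcome precision, since the measures in this paper need not be compactly supported and a bare citation of Stone--Weierstrass is delicate there. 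Your main route instead establishes continuity of $Q_{\mu,\Lambda}$ via the projection identity $Q_{\mu,\Lambda}(\xi)=\|Pe_\xi\|^2$ and the $L^2(\mu)$-continuity of $\xi\mapsto e_\xi$, so that $\{Q_{\mu,\Lambda}=1\}$ is closed, contains $D$, and hence is all of $\R$, after which Theorem~\ref{Q}(iii) applies. What this buys is a self-contained reduction to the already-proved criterion that sidesteps any density-of-exponentials input; your observation that mere lower semicontinuity of $Q_{\mu,\Lambda}$ would not suffice correctly identifies why the projection identity is the essential ingredient. Both arguments are sound.
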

	
\begin{proof}
Suppose that $Q_{\mu,\Lambda}(\xi)\equiv1$ for all $\xi\in D$. Then we have
$$
\| e^{2\pi i \xi x}\|=1=Q_{\mu,\Lambda}(\xi)=\sum_{\lambda\in \Lambda}|\hat{\mu}(\xi+\lambda)|^2=\sum_{\lambda\in \Lambda}|\langle e^{-2\pi i \lambda x}, e^{2\pi i \xi x}  \rangle|^2,
$$
for all $\xi\in D$. Since $\Lambda$ is an orthogonal set of $\mu$, we have
$$
\{e^{2\pi i \xi x}:\xi\in D\}\subseteq \overline{\text{span}\{e^{2\pi i \lambda x}:\lambda\in\Lambda\}}.
$$

Since $D$ is a dense subset of $\R$, by Stone-Weierstrass Theorem\cite{Rudin-1991}, we have
$$
\overline{\text{span}\{e^{2\pi i \xi x}:\xi\in D\}}=L^2(\mu),
$$
and it implies that  $L^2(\mu)=\overline{\text{span}\{e^{2\pi i \lambda x}:\lambda\in\Lambda\}}$. Hence $\{e^{-2\pi i \lambda\cdot x}:\lambda\in\Lambda\}$ is  an orthonormal basis in $L^2(\mu)$.
		
The proof of necessity is directly from Theorem \ref{Q} (iii).
\end{proof}

The following two conclusions are direct consequence of Theorem \ref{Q}, which are frequently used in our proofs.
\begin{corollary}\label{prop}
Let $(\mu, \Lambda)$ be a spectral pair. Then we have $(\mu, \Lambda+a)$ is also a spectral pair for all $a\in\R$.
\end{corollary}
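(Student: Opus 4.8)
The plan is to reduce everything to the quadratic quantity $Q_{\mu,\Lambda}$ and invoke the characterization of orthonormal bases given in Theorem \ref{Q}(iii). Since $(\mu,\Lambda)$ is a spectral pair by hypothesis, that theorem tells us precisely that $Q_{\mu,\Lambda}(\xi)\equiv 1$ for every $\xi\in\R$. To prove that $(\mu,\Lambda+a)$ is again a spectral pair, it therefore suffices to verify that $Q_{\mu,\Lambda+a}(\xi)\equiv 1$ on all of $\R$, and to apply Theorem \ref{Q}(iii) in the reverse direction.

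The key (and essentially only) step is a change of summation index. First I would write out
\[
Q_{\mu,\Lambda+a}(\xi)=\sum_{\lambda\in\Lambda+a}|\hat{\mu}(\xi+\lambda)|^2
=\sum_{\lambda'\in\Lambda}|\hat{\mu}((\xi+a)+\lambda')|^2
=Q_{\mu,\Lambda}(\xi+a),
\]
where the middle equality is just the substitution $\lambda=\lambda'+a$ with $\lambda'$ ranging over $\Lambda$. Because $Q_{\mu,\Lambda}\equiv 1$ holds at \emph{every} real argument, evaluating at $\xi+a$ gives $Q_{\mu,\Lambda+a}(\xi)=1$ for all $\xi\in\R$, and Theorem \ref{Q}(iii) then yields that $\{e_\lambda:\lambda\in\Lambda+a\}$ is an orthonormal basis of $L^2(\mu)$, i.e. $(\mu,\Lambda+a)$ is a spectral pair.

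There is no real obstacle here; the statement is a direct consequence of the translation structure already built into $Q_{\mu,\Lambda}$, and the only thing to be careful about is that the identity $Q_{\mu,\Lambda}\equiv 1$ is an equality of functions on all of $\R$ rather than at a single point, which is exactly what lets the shifted argument $\xi+a$ be handled uniformly. One could equivalently argue without $Q$ at all by noting that multiplication by the unimodular function $e_a$ is a unitary operator on $L^2(\mu)$ sending each $e_\lambda$ to $e_{\lambda+a}$, hence carrying an orthonormal basis to an orthonormal basis; I would mention this viewpoint as a sanity check but carry out the proof through $Q_{\mu,\Lambda}$ since it is the most immediate application of the preceding theorem.
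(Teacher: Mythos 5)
Your proof is correct and is exactly the argument the paper intends: the paper states this corollary without proof, calling it a ``direct consequence of Theorem \ref{Q}'', and your identity $Q_{\mu,\Lambda+a}(\xi)=Q_{\mu,\Lambda}(\xi+a)$ combined with Theorem \ref{Q}(iii) is the standard way to make that precise. The unitary-multiplication remark is a valid alternative but adds nothing needed here.
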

As a consequence of above conclusion, we may assume that $0\in\Lambda$  if $\Lambda$ is the spectrum of some probability measure on $\R$.

\begin{corollary}\label{change N}
Given $\mu\in \mcal{P}(\R)$ and a real  $a\neq 0$, let $\rho(\cdot)=\mu(a\cdot)$.Then $(\mu,\Lambda)$ is a spectral pair if and only if $(\rho,a\Lambda)$ is a spectral.
\end{corollary}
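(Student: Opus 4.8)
The plan is to reduce the whole equivalence to a single scaling identity for the Fourier transform, and then read off the conclusion from the $Q$-function characterization in Theorem \ref{Q}(iii). First I would unwind the definition $\rho(\cdot)=\mu(a\,\cdot)$: for a Borel set $E$ one has $\rho(E)=\mu(aE)$, so $\rho$ is the push-forward of $\mu$ under the dilation $x\mapsto x/a$. Equivalently, for every bounded Borel $g$,
\[
\int_{\R} g\,\D\rho=\int_{\R} g\!\left(\tfrac{x}{a}\right)\D\mu(x).
\]
Taking $g(x)=e^{-2\pi i \xi x}$ yields the key identity
\[
\hat{\rho}(\xi)=\int_{\R} e^{-2\pi i \xi x/a}\,\D\mu(x)=\hat{\mu}\!\left(\tfrac{\xi}{a}\right),\qquad \xi\in\R.
\]

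Next I would feed this into the $Q$-function. For any $\xi\in\R$, reindexing the sum over $a\Lambda$ by $\eta=a\lambda$ gives
\[
Q_{\rho,a\Lambda}(\xi)=\sum_{\lambda\in\Lambda}\bigl|\hat{\rho}(\xi+a\lambda)\bigr|^2
=\sum_{\lambda\in\Lambda}\Bigl|\hat{\mu}\bigl(\tfrac{\xi}{a}+\lambda\bigr)\Bigr|^2
=Q_{\mu,\Lambda}\!\left(\tfrac{\xi}{a}\right).
\]
Since $a\neq0$, the map $\xi\mapsto \xi/a$ is a bijection of $\R$, so $Q_{\rho,a\Lambda}(\xi)\equiv 1$ on $\R$ holds if and only if $Q_{\mu,\Lambda}(\xi)\equiv 1$ on $\R$. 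By Theorem \ref{Q}(iii), the former is precisely the assertion that $\{e^{-2\pi i \eta x}:\eta\in a\Lambda\}$ is an orthonormal basis of $L^2(\rho)$, i.e.\ that $(\rho,a\Lambda)$ is a spectral pair, while the latter says that $(\mu,\Lambda)$ is a spectral pair. This establishes both implications at once.

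There is essentially no analytic difficulty here; the single point requiring care is the direction of the dilation. One must check that the convention $\rho(E)=\mu(aE)$ produces $\hat\rho(\xi)=\hat\mu(\xi/a)$ rather than $\hat\mu(a\xi)$, since it is exactly this that makes the spectrum scale by the factor $a$ (matching $a\Lambda$) and not by $a^{-1}$. As an independent sanity check one can instead verify orthogonality directly via Theorem \ref{Q}(i)---for distinct $a\lambda_1,a\lambda_2\in a\Lambda$ one has $\hat\rho(a\lambda_1-a\lambda_2)=\hat\mu(\lambda_1-\lambda_2)=0$---and then recover the converse by symmetry, applying the forward direction to $\rho$ with $a^{-1}$ in place of $a$, using $\mu(\cdot)=\rho(a^{-1}\,\cdot)$ and $a^{-1}(a\Lambda)=\Lambda$.
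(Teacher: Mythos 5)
Your proposal is correct and takes essentially the same route as the paper: both establish the scaling identity $\hat{\rho}(a\xi)=\hat{\mu}(\xi)$ (equivalently $\hat{\rho}(\xi)=\hat{\mu}(\xi/a)$) and then transfer $Q_{\mu,\Lambda}\equiv 1$ into $Q_{\rho,a\Lambda}\equiv 1$ by reindexing the sum over $a\Lambda$ and using the bijection $\xi\mapsto\xi/a$, concluding via Theorem \ref{Q}. Your added remarks on the direction of the dilation and the orthogonality cross-check via Theorem \ref{Q}(i) are sound but not needed beyond what the paper's computation already contains.
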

\begin{proof}
Since $\rho(\cdot)=\mu(a\cdot)$, we have
$$
\hat{\rho}(a\xi)=\int e^{-2\pi i a\xi x}\D{\rho(x)}=\int e^{-2\pi i a\xi x}\D{\mu(ax)}=\hat{\mu}(\xi),
$$
for all $\xi\in\R$. For every $\lambda\in a\Lambda$, there exists a unique $\lambda'\in \Lambda$ such that $\lambda=a\lambda'$.  For every $\xi\in\R$, write $\xi'=\frac{\xi}{a}$, and we have that
\begin{align*}
\sum_{\lambda\in a\Lambda}\Big|\hat{\rho}(\xi+\lambda)\Big|^2   					=\sum_{\lambda'\in\Lambda}\Big|\hat{\rho}\Big(a\big(\frac{\xi}{a}+\lambda'\big)\Big)\Big|^2
=\sum_{\lambda'\in\Lambda}\Big|\hat{\mu}(\xi'+\lambda')\Big|^2.
\end{align*}

By Theorem \ref{Q},  $(\mu,\Lambda)$ is a spectral pair if and only if $(\rho,a\Lambda)$ is a spectral.
\end{proof}

For each $n\in\N$, we write $\Phi_n(x)$ for the minimal polynomial of $e^{2\pi i \frac{1}{n}}$ over $\Z[x]$. We need the following standard theorem on minimal polynomials to study the properties of character polynomials, see \cite{Washington-1997} for details.
\begin{theorem}\label{minimal polynomial}
For each $q\in\Z$ such that $\gcd(q,n)=1$, $\Phi_n(x)$ is also the minimal polynomial of $e^{2\pi i \frac{q}{n}}$ over $\Z[x]$. Moreover,
\[
\sum_{k=0}^{n-1}x^k=\prod_{p|n,p>1} \Phi_p(x) .
\]
\end{theorem}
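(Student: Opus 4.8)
The statement is the classical irreducibility and factorization theory of cyclotomic polynomials, and the plan is to prove it from scratch by reduction modulo primes. Throughout write $\zeta_n = e^{2\pi i/n}$, so that by definition $\Phi_n$ is the minimal polynomial of $\zeta_n$; that it may be taken monic with integer coefficients follows from Gauss's lemma together with the fact that $\zeta_n$ is an algebraic integer, so $\Phi_n$ is monic and irreducible in $\Z[x]$. Since every root of a monic irreducible polynomial in $\Z[x]$ has that polynomial as its minimal polynomial, the first assertion reduces to showing that $\zeta_n^q = e^{2\pi i q/n}$ is a root of $\Phi_n$ whenever $\gcd(q,n)=1$. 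Factoring $q$ into primes $q = p_1 p_2 \cdots p_r$, each $p_i$ is coprime to $n$, so by iterating it suffices to prove the single step: if $\alpha$ is any root of $\Phi_n$ and $p$ is a prime with $p \nmid n$, then $\alpha^p$ is also a root of $\Phi_n$.

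I would prove this key step by reduction modulo $p$. Write $x^n - 1 = \Phi_n(x) g(x)$ with $g \in \Z[x]$ monic, and suppose for contradiction that $\alpha^p$ is not a root of $\Phi_n$. Since $\alpha^n = 1$, $\alpha^p$ is again an $n$-th root of unity and hence a root of $g$, so $\alpha$ is a root of $g(x^p)$; by minimality of $\Phi_n$ (and Gauss's lemma) this forces $\Phi_n(x) \mid g(x^p)$ in $\Z[x]$. Reducing modulo $p$ and using the Frobenius identity $g(x^p) \equiv g(x)^p \pmod p$, we obtain $\overline{\Phi_n} \mid \overline{g}^{\,p}$ in $\mathbb{F}_p[x]$, so $\overline{\Phi_n}$ and $\overline{g}$ share a common irreducible factor and therefore $\overline{x^n - 1} = \overline{\Phi_n}\,\overline{g}$ has a repeated factor in $\mathbb{F}_p[x]$. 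But $p \nmid n$ makes $x^n - 1$ separable modulo $p$, since its derivative $n x^{n-1}$ is coprime to it; this contradiction completes the first assertion. I expect this separability–Frobenius argument to be the one genuinely nontrivial obstacle in the whole proof.

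For the factorization, the plan is first to pin down the roots of each $\Phi_m$ and then to count. Fix $m$ and let $\beta$ be any root of $\Phi_m$. Because $\Phi_m \mid x^m - 1$ we have $\beta^m = 1$, so $\beta$ is a primitive $d$-th root of unity for $d = \mathrm{ord}(\beta) \mid m$; by the first assertion its minimal polynomial is $\Phi_d$, and since $\Phi_m$ is irreducible with $\beta$ as a root we get $\Phi_m = \Phi_d$. Applying $\Phi_d \mid x^d - 1$ to the root $\zeta_m$ of $\Phi_m = \Phi_d$ gives $\zeta_m^d = 1$, i.e.\ $m \mid d$, so $d = m$. Hence the roots of $\Phi_m$ are exactly the $\varphi(m)$ primitive $m$-th roots of unity; in particular $\deg \Phi_m = \varphi(m)$ and $\Phi_{d_1} \neq \Phi_{d_2}$ whenever $d_1 \neq d_2$.

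It remains to assemble the product over divisors (the index written $p$ in the statement ranges over divisors $d \mid n$ with $d > 1$). Each $n$-th root of unity is a primitive $d$-th root for a unique $d \mid n$, namely its order, so the roots of $\prod_{d \mid n} \Phi_d(x)$ are precisely the $n$ distinct $n$-th roots of unity counted once each, the cardinalities matching via the identity $\sum_{d \mid n} \varphi(d) = n$. Since both $\prod_{d \mid n} \Phi_d$ and $x^n - 1$ are monic of degree $n$ with the same roots, they coincide. Finally $\Phi_1(x) = x - 1$, so dividing gives
\[
\sum_{k=0}^{n-1} x^k = \frac{x^n - 1}{x - 1} = \prod_{\substack{d \mid n \\ d > 1}} \Phi_d(x),
\]
which is the claimed formula.
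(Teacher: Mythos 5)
Your proof is correct and complete, including the one genuinely nontrivial point (that the divisibility $\Phi_n(x)\mid g(x^p)$ descends to $\Z[x]$ because $\Phi_n$ is monic, so reduction mod $p$ and Frobenius apply) and the correct reading of the product in the statement as running over all divisors $d\mid n$ with $d>1$, not just primes. The paper offers no proof of this statement at all---it is quoted as a standard fact with a citation to Washington's \emph{Introduction to Cyclotomic Fields}---and your argument (the classical reduction-mod-$p$ separability argument for the irreducibility claim, followed by the identification of the roots of $\Phi_d$ and the degree count $\sum_{d\mid n}\varphi(d)=n$ for the factorization of $x^n-1$) is exactly the standard proof found in such references, so there is nothing to compare.
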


The following conclusion shows the connection between a complete residue system modulo $M$ and $\{0,1,\cdots,M-1\}$.

\begin{lemma}\label{function}
Let $B$ be a complete residue system modulo $M$. Then
$$
f_B(x)\in\Z[x].
$$
Moreover, If $B$ satisfies $UDZ$, we have that
$$
\mathcal{Z}(\mathcal{M}_{B})=\Big\{\frac{j}{M}:j\in\Z\setminus M\Z \Big\}  ,
$$
where $\mathcal{M}_{B}$ is the Fourier transform of $\delta_B$ given by \eqref{def_FMB}.
\end{lemma}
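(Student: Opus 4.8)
The plan is to read both claims off the defining identity
\begin{equation*}
\sum_{b\in B}x^{b}=f_B(x)\,x^{b_{*}}\sum_{k=0}^{M-1}x^{k},
\end{equation*}
obtained by clearing the denominator in \eqref{def_fb} (recall $\# B=M$ since $B$ is a complete residue system modulo $M$). Since $b-b_{*}\ge 0$ for every $b\in B$, the numerator factors as $x^{b_{*}}\sum_{b\in B}x^{b-b_{*}}$ with $\sum_{b\in B}x^{b-b_{*}}\in\Z[x]$ a genuine polynomial, so the content of the first assertion is exactly that the geometric sum $\sum_{k=0}^{M-1}x^{k}$ divides $\sum_{b\in B}x^{b-b_{*}}$ in $\Z[x]$, the quotient being $f_B$.

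To prove that divisibility I would argue through the roots. The polynomial $\sum_{k=0}^{M-1}x^{k}=(x^{M}-1)/(x-1)$ has the $M-1$ distinct roots $\omega=e^{2\pi i\ell/M}$, $\ell=1,\dots,M-1$, namely all $M$-th roots of unity other than $1$. For such $\omega$ the value $\omega^{b}$ depends only on $b\bmod M$, and as $b$ ranges over the complete residue system $B$ the residue $b\bmod M$ runs through $\{0,1,\dots,M-1\}$ exactly once; hence
\begin{equation*}
\sum_{b\in B}\omega^{b-b_{*}}=\omega^{-b_{*}}\sum_{b\in B}\omega^{b}=\omega^{-b_{*}}\sum_{r=0}^{M-1}\omega^{r}=\omega^{-b_{*}}\cdot\frac{\omega^{M}-1}{\omega-1}=0.
\end{equation*}
Thus every (simple) root of $\sum_{k=0}^{M-1}x^{k}$ is a root of $\sum_{b\in B}x^{b-b_{*}}$, so the former divides the latter in $\C[x]$, hence in $\Q[x]$; and since $\sum_{k=0}^{M-1}x^{k}$ is monic with integer coefficients, the quotient lies in $\Z[x]$ by monic polynomial division, giving $f_B\in\Z[x]$. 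Equivalently one may use the cyclotomic factorization $\sum_{k=0}^{M-1}x^{k}=\prod_{d\mid M,\,d>1}\Phi_{d}(x)$ from Theorem~\ref{minimal polynomial}, check that each primitive root $\zeta_d$ ($d\mid M$, $d>1$) is a root of the numerator, and invoke the pairwise coprimality of the $\Phi_{d}$. The step I expect to need the most care is precisely this passage from vanishing on roots to divisibility over $\Z[x]$ rather than merely $\C[x]$; everything else is formal.

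For the second assertion I would substitute $x=e^{-2\pi i\xi}$ into the same identity and divide by $M$, obtaining
\begin{equation*}
\mathcal{M}_B(\xi)=f_B\!\left(e^{-2\pi i\xi}\right)e^{-2\pi i b_{*}\xi}\cdot\frac{1}{M}\sum_{k=0}^{M-1}e^{-2\pi i k\xi}.
\end{equation*}
Because the factor $e^{-2\pi i b_{*}\xi}$ never vanishes, this factorization yields
\begin{equation*}
\mathcal{Z}(\mathcal{M}_B)=\mathcal{Z}\!\left(f_B\circ e^{-2\pi i x}\right)\cup\mathcal{Z}(g),\qquad g(\xi)=\sum_{k=0}^{M-1}e^{-2\pi i k\xi}.
\end{equation*}
A direct geometric-sum computation shows $g(\xi)=0$ exactly when $e^{-2\pi i M\xi}=1$ and $e^{-2\pi i\xi}\ne1$, that is, when $\xi\in\big\{\tfrac{j}{M}:j\in\Z\setminus M\Z\big\}$, so $\mathcal{Z}(g)$ equals this set. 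Finally, the hypothesis that $B$ satisfies \emph{UDZ}, condition~\eqref{def_UDZ}, says exactly that $\mathcal{Z}(f_B\circ e^{-2\pi i x})\subseteq\big\{\tfrac{j}{M}:j\in\Z\setminus M\Z\big\}$, so the union collapses and $\mathcal{Z}(\mathcal{M}_B)=\big\{\tfrac{j}{M}:j\in\Z\setminus M\Z\big\}$, as claimed.
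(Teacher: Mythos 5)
Your proof is correct and follows essentially the same route as the paper: verify that every nontrivial $M$-th root of unity annihilates $\sum_{b\in B}x^{b}$ using the complete-residue-system property, conclude $\sum_{k=0}^{M-1}x^{k}\mid x^{-b_*}\sum_{b\in B}x^{b}$ in $\Z[x]$, and then read off $\mathcal{Z}(\mathcal{M}_B)$ from the resulting factorization together with \emph{UDZ}. The only cosmetic difference is that you justify the integrality of the quotient via simplicity of the roots plus monic division, whereas the paper routes it through the cyclotomic factorization of Theorem~\ref{minimal polynomial}; both are valid and you note the cyclotomic alternative yourself.
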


\begin{proof}
Since  $B$ be a complete residue system modulo $M$, i.e., $B\equiv \{0,1,2,\cdots, M-1\} \pmod{M}$, for every $l\in\{1,2,\cdots,M-1\}$, we have
\[
\sum_{b\in B}e^{2\pi i \frac{l}{M}\cdot b}=\sum_{k=0}^{M-1}e^{2\pi i \frac{l}{M}\cdot k}=0,
\]
which implies that $e^{2\pi i \frac{1}{M}},\ e^{2\pi i \frac{2}{M}},\cdots, \ e^{2\pi i \frac{M-1}{M}} $ are roots of the polynomial $\sum_{b\in B}x^b$.

Recall that  $b_{*}=\min\{b:b\in B\}$. Since $x^{-b_{*}}\sum_{b\in B}x^b\in\Z[x]$, we have that
$$
\prod_{p|M,p>1} \Phi_p(x)\Big| x^{-b_{*}}\sum_{b\in B}x^b.
$$			
By Theorem \ref{minimal polynomial}, it implies that
$$
\sum_{k=0}^{M-1}x^k\Big| x^{-b_{*}}\sum_{b\in B}x^b.
$$
Recall that $f_B(x)=\frac{x^{-b_{*}}\sum_{b\in B}x^b}{ \sum_{k=0}^{\# B-1}x^k}$, we obtain that $f_B(x)\in\Z[x]$.

Since $f_B(x)\in\Z[x]$ and $B$ satisfies $UDZ$, we have
$$
\mathcal{Z}(\mathcal{M}_{B})=\mathcal{Z}(f_B\circ e^{-2\pi i x})\bigcup\mathcal{Z}( \sum_{k=0}^{\# B-1}e^{-2\pi i kx})= \Big\{\frac{j}{M}:j\in\Z\setminus M\Z \Big\},
$$
and the conclusion holds.
\end{proof}

\section{Spectral decomposition of infinite convolutions }\label{sec_pf}
Recall that the infinite convolution of $\{(N_k,B_k)\}_{n=1}^\infty $ is
\begin{equation*}
\mu =\delta_{{N_1}^{-1}B_1}\ast\delta_{(N_1N_2)^{-1}B_2}\ast\dots\ast\delta_{(N_1N_2\cdots N_k)^{-1}B_k} *\cdots.
\end{equation*}
We write,
\begin{equation}\label{nu}
\nu=\nu_{>1}=\delta_{{N_{2}}^{-1}B_{2}}\ast\delta_{(N_{2}N_{3})^{-1}B_{3}}\ast\cdots,
\end{equation}
thus $\mu(\cdot)=\delta_{{N_1}^{-1}B_1}(\cdot)\ast\nu(N_1\cdot)$ and $\hat{\mu}(\xi)=\mathcal{M}_{B_1}(\frac{\xi}{N_1})\cdot\hat{\nu}(\frac{\xi}{N_1})$.
	
To study the necessary condition for the  spectrality of $\mu$,  we need  to construct the spectrum of $\nu$ by using the spectrum $\Lambda$ of $\mu$ and analyse the structure of the spectrum of $\nu$.
The spectrum of $\mu$ is closely related to the zero set  of its Fourier transform.  To show that the spectrum of $\mu$ is  a  subset of  $\Q$, we have to explore  the  zero set of $\hat{\mu}$. 

We say that the set $\Phi \subseteq \mathcal{P}(\R^d)$ is {\it tight} (sometimes it is also called {\it uniformly tight}) if for each $\epsilon>0$ there exists a compact subset $K\subset\R^d$ such that
\[
\inf_{\eta\in\Phi}\eta(K)>1-\epsilon.
\]
Immediately, we have the following simple facts, and we include the proof here for the readers convenience.

\begin{lemma}\label{equicontinuous}
Let $\Phi \subseteq \mathcal{P}(\R^d)$ be tight. Then the family $\{\hat{\eta}:\eta \in \Phi\}$ is equicontinuous.
\end{lemma}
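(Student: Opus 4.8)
The plan is to bound the oscillation of each $\hat{\eta}$ directly and show that tightness furnishes a common modulus of continuity for the whole family; in fact this yields uniform equicontinuity, which is stronger than pointwise equicontinuity. The starting point is the elementary estimate obtained by writing $e^{-2\pi i \xi\cdot x}-e^{-2\pi i \zeta\cdot x}=e^{-2\pi i\zeta\cdot x}\big(e^{-2\pi i(\xi-\zeta)\cdot x}-1\big)$ and moving the modulus inside the integral:
\[
|\hat{\eta}(\xi)-\hat{\eta}(\zeta)|\le \int_{\R^d}\big|e^{-2\pi i(\xi-\zeta)\cdot x}-1\big|\D\eta(x).
\]
The right-hand side depends on $\xi,\zeta$ only through $\xi-\zeta$, so any bound here automatically controls the oscillation uniformly in the base point.

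Next I would split the integral using tightness. Given $\epsilon>0$, tightness provides a compact set $K$ with $\eta(K)>1-\epsilon/4$, hence $\eta(K^c)<\epsilon/4$, for every $\eta\in\Phi$ simultaneously; since $K$ is bounded, fix $R>0$ with $K\subseteq\{x:|x|\le R\}$. On $K^c$ I use the crude bound $|e^{i\theta}-1|\le 2$ to get $\int_{K^c}\le 2\,\eta(K^c)<\epsilon/2$, uniformly in $\eta$. On $K$ I use $|e^{i\theta}-1|\le|\theta|$ with $\theta=-2\pi(\xi-\zeta)\cdot x$, together with $|x|\le R$, to obtain $\int_K\le 2\pi\,|\xi-\zeta|\,R\,\eta(K)\le 2\pi R\,|\xi-\zeta|$.

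Combining these gives $|\hat{\eta}(\xi)-\hat{\eta}(\zeta)|<\epsilon/2+2\pi R\,|\xi-\zeta|$ for all $\eta\in\Phi$, so choosing $\delta=\epsilon/(4\pi R)$ makes the right-hand side less than $\epsilon$ whenever $|\xi-\zeta|<\delta$. Since $\delta$ depends only on $\epsilon$ (through the radius $R$ supplied by tightness) and not on $\eta$ or on the point $\zeta$, the family $\{\hat{\eta}:\eta\in\Phi\}$ is (uniformly) equicontinuous. The only substantive ingredient is tightness itself: it is precisely what allows the single radius $R$ to control the tail mass of every measure in $\Phi$ at once, whereas without it mass escaping to infinity could keep $\big|e^{-2\pi i(\xi-\zeta)\cdot x}-1\big|$ large even for small $|\xi-\zeta|$. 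Everything else reduces to the two elementary bounds on $|e^{i\theta}-1|$, so I expect no real obstacle beyond recognizing this role of tightness.
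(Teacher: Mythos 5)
Your proof is correct and follows essentially the same route as the paper's: split the integral over a compact set $K$ furnished by tightness and its complement, use the crude bound $|e^{i\theta}-1|\le 2$ off $K$ and smallness of $|e^{i\theta}-1|$ for small $|\xi-\zeta|$ on $K$. The only cosmetic difference is that you make the modulus of continuity explicit via $|e^{i\theta}-1|\le|\theta|$ and the radius $R$ of $K$, whereas the paper simply invokes the existence of a suitable $\delta$; both yield uniform equicontinuity.
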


\begin{proof}
Given $\ep>0$, since $\Phi$ is tight, there exists a compact set $K \sse \R^d$ such that
$$
\inf_{\eta \in \Phi} \eta(K) > 1 - \frac{\ep}{3}.
$$
Then we may find $\delta >0$ such that for all $|y| < \delta$ and all $x\in K$,
$$
\left| 1- e^{2\pi i y\cdot x} \right| < \frac{\ep}{3}.
$$
Hence, for all $\eta \in \Phi$ and all $\xi_1,\xi_2 \in \R^d$ with $|\xi_1 - \xi_2| <\delta$, we have
\begin{align*}
\left| \wh{\eta}(\xi_1) - \wh{\eta}(\xi_2) \right|
    & = \left| \int_{\R^d} e^{-2\pi i \xi_1 \cdot x} \big(1- e^{ 2 \pi i (\xi_1 - \xi_2) \cdot x} \big) \D \eta(x) \right| \\
    & \le \int_K \left| 1- e^{ 2\pi i (\xi_1 - \xi_2) \cdot x} \right| \D \eta(x) + \int_{\R^d \sm K} \left| 1- e^{2\pi i (\xi_1 - \xi_2) \cdot x}  \right| \D \eta(x) \\
    & \le \frac{\ep}{3} \eta(K) + 2 \eta(\R^d \sm K) \\
    &< \ep.
\end{align*}

Therefore, the family $\set{\wh{\eta}: \eta\in \Phi}$ is equicontinuous.
\end{proof}

Let $\mu$ be the infinite convolution of $\{(N_k,B_k)\}_{n=1}^\infty $. Recall that $\mathcal{Z}(\hat{\mu})=\{\xi\in\R:\hat{\mu}(\xi)=0\}$, we may obtain the zero set of $\hat{\mu}$ if $\{\nu_{>k}\}_{k=1}^\infty$ is tight, where $\nu_k$ is given by \eqref{nu_n}.

\begin{lemma}\label{zero set}
Given a sequence $\{(N_k,B_k)\}_{k=1}^\infty $ satisfying $N_k\ge\#B_k\ge2$, for all $k\in \N$. Let $\mu$ be the infinite convolution of $\{(N_k,B_k)\}_{k=1}^\infty $. If $\{\nu_{>k}\}_{k=1}^\infty$ is tight, then
\[
\mathcal{Z}(\hat{\mu})=\bigcup_{k=1}^{\f}N_1N_2\cdots N_k\mathcal{Z}(\mathcal{M}_{B_k}).
\]
\end{lemma}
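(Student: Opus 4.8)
The plan is to exploit the factorisation $\mu = \mu_k * \mu_{>k}$, which gives
$\hat\mu(\xi) = \widehat{\mu_k}(\xi)\,\widehat{\mu_{>k}}(\xi)$ for every $k$, where
$\widehat{\mu_k}(\xi) = \prod_{j=1}^{k} \mathcal{M}_{B_j}\!\left(\frac{\xi}{N_1 N_2 \cdots N_j}\right)$
is a \emph{finite} product. The inclusion $\supseteq$ is then immediate: if
$\xi \in N_1 N_2 \cdots N_k\,\mathcal{Z}(\mathcal{M}_{B_k})$, then
$\mathcal{M}_{B_k}\!\left(\frac{\xi}{N_1 N_2 \cdots N_k}\right)=0$, so this single factor, and hence
$\widehat{\mu_k}(\xi)$ and $\hat\mu(\xi)$, vanishes.

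The content lies in the reverse inclusion, where the obstacle is that an infinite product may vanish in the limit without any individual factor vanishing; the tightness hypothesis is exactly what excludes this. First I would record that, by Corollary \ref{change N} applied to $\nu_{>k}(\cdot)=\mu_{>k}\!\left((N_1 N_2 \cdots N_k)^{-1}\cdot\right)$, one has
$\widehat{\mu_{>k}}(\xi)=\widehat{\nu_{>k}}\!\left(\frac{\xi}{N_1 N_2 \cdots N_k}\right)$. Since $\{\nu_{>k}\}_{k=1}^\infty$ is tight, Lemma \ref{equicontinuous} shows that $\{\widehat{\nu_{>k}}\}$ is equicontinuous; combined with $\widehat{\nu_{>k}}(0)=1$ for every $k$ (each $\nu_{>k}$ being a probability measure), this yields a single $\delta>0$ such that $|\widehat{\nu_{>k}}(y)|>\tfrac12$ for all $|y|<\delta$ and all $k$.

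Now fix $\xi$ with $\hat\mu(\xi)=0$. Because $N_k\ge 2$, the products $N_1 N_2 \cdots N_k\to\infty$, so I can choose $K$ with $|\xi|/(N_1 N_2 \cdots N_K)<\delta$; then
$\widehat{\mu_{>K}}(\xi)=\widehat{\nu_{>K}}\!\left(\frac{\xi}{N_1 N_2 \cdots N_K}\right)\neq 0$.
From $\hat\mu(\xi)=\widehat{\mu_K}(\xi)\,\widehat{\mu_{>K}}(\xi)=0$ it follows that the finite product
$\widehat{\mu_K}(\xi)=\prod_{j=1}^{K} \mathcal{M}_{B_j}\!\left(\frac{\xi}{N_1 N_2 \cdots N_j}\right)$ vanishes, so some factor with $j\le K$ is zero, i.e.\ $\xi\in N_1 N_2 \cdots N_j\,\mathcal{Z}(\mathcal{M}_{B_j})$. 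This gives $\subseteq$ and closes the argument.

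The only delicate point is the uniform lower bound on the tails $|\widehat{\mu_{>k}}(\xi)|$ for large $k$; everything else is bookkeeping with the factorisation. I expect the equicontinuity-plus-normalisation step to be the crux, since it is precisely what converts the qualitative tightness assumption into the quantitative statement that all $\widehat{\nu_{>k}}$ are bounded away from zero on a fixed neighbourhood of the origin, uniformly in $k$, which is what rules out a vanishing infinite product with no vanishing factor.
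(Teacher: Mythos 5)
Your proposal is correct and follows essentially the same route as the paper: the easy inclusion via the finite partial product, and the reverse inclusion by using tightness plus Lemma \ref{equicontinuous} and $\widehat{\nu_{>k}}(0)=1$ to bound the tail $\widehat{\nu_{>k}}$ away from zero on a uniform neighbourhood of the origin, forcing a vanishing factor in the finite product. The only (cosmetic) difference is that you work with $|\widehat{\nu_{>k}}|>\tfrac12$ where the paper uses an unspecified $\epsilon>0$.
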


\begin{proof}	
Since $\{\mu_k\}_{k=1}^\infty$ converges weakly to the infinite convolution $\mu$, we have
$$
\hat{\mu}(\xi)=\lim_{k\rightarrow\f}\hat{\mu}_k(\xi)=\prod_{k=1}^{\f}\mathcal{M}_{B_k}((N_1N_2\cdots N_k)^{-1}\xi),
$$
and it implies that
$$
\bigcup_{k=1}^{\f}N_1N_2\cdots N_k\mathcal{Z}(\mathcal{M}_{B_k})\subseteq \mathcal{Z}(\hat{\mu}).
$$

It remains to show that $\bigcup_{k=1}^{\f}N_1N_2\cdots N_k\mathcal{Z}(\mathcal{M}_{B_k})\supseteq \mathcal{Z}(\hat{\mu}).$  Since $\{\nu_{>k}\}_{k=1}^\infty$ is tight, by Lemma \ref{equicontinuous}, the set $\{\hat{\nu}_{>k}\}_{k=1}^\infty$ is equicontinuous. Note that $\hat{\nu}_{>k}(0)=1$  for all $k\in\N$, the equicontinuity of $\{\hat{\nu}_{>k}\}_{k=1}^\infty$ implies that there exists $\delta>0$ and $\epsilon>0$ such that
$$
\hat{\nu}_{>k}(\xi)>\epsilon,
$$
for all $|\xi|<\delta$ and $k\in\N$. Hence for each given  $\xi'\in \mathcal{Z}(\hat{\mu})$, there exists an integer $K>0$ such that 	
$$
|(N_1N_2\cdots N_K)^{-1}\xi'|<\delta,
$$
and this implies that
$$
\hat{\nu}_{>K-1}\big((N_1N_2\cdots N_K)^{-1}\xi'\big)>\epsilon.
$$	
Immediately, it follows that
\begin{align*}
\hat{\mu}(\xi')	&=\hat{\nu}_{>m-1}\big((N_1N_2\cdots N_m)^{-1}\xi'\big)\prod_{k=1}^{m-1}\mathcal{M}_{B_k}\big((N_1N_2\cdots N_k)^{-1}\xi'\big)\\
&\geq \epsilon\prod_{k=1}^{m-1}\mathcal{M}_{B_k}\big((N_1N_2\cdots N_k)^{-1}\xi'\big),
\end{align*}
Since  $\xi'\in \mathcal{Z}(\hat{\mu})$, i.e., $\hat{\mu}(\xi')=0$, we have that
$$
\xi'\in\bigcup_{k=1}^{m-1}N_1N_2\cdots N_k\mathcal{Z}(\mathcal{M}_{B_k}).
$$

Therefore
$$
\mathcal{Z}(\hat{\mu})\subseteq\bigcup_{k=1}^{\f}N_1N_2\cdots N_k\mathcal{Z}(\mathcal{M}_{B_k}),
$$
and we complete the proof.
\end{proof}

 In the rest of this section, the following assumption is need to  decompose the spectrum of infinite convolution $\mu$.

\textbf{Assumption *}: {\em Given $\{(N_k,B_k)\}_{k=1}^\infty $ where $B_k$ is a complete residue system modulo $M_k$ and satisfies uniform discrete zero condition(UDZ) and $N_k\ge M_k\ge2$ are integers, for all $k\in \N$, let $\mu$ and $\nu_{>k}$ be the corresponding measures given by~\eqref{infinite-convolution} and  \eqref{nu_n}, respectively.  Assume that $\{\nu_{>k}\}_{k=1}^\infty$ is tight.}

The next conclusion show that all elements in the spectrum of $\mu$ are rational numbers.
\begin{lemma}\label{rational number}
Under Assumption *, Let   $\Lambda$ be the spectrum of the infinite convolution $\mu$ with $0\in\Lambda$. Then $\Lambda\subseteq\Q$
\end{lemma}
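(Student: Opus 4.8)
The plan is to combine the orthogonality of a spectrum with the explicit description of $\mathcal{Z}(\hat{\mu})$ furnished by Lemma \ref{zero set}. First I would record that, since $(\mu,\Lambda)$ is a spectral pair, $\Lambda$ is in particular an orthogonal set of $\mu$; by Theorem \ref{Q}(i) this means $\hat{\mu}(\lambda_1-\lambda_2)=0$ for all distinct $\lambda_1,\lambda_2\in\Lambda$. Using the normalization $0\in\Lambda$ and taking $\lambda_2=0$, every nonzero $\lambda\in\Lambda$ satisfies $\hat{\mu}(\lambda)=0$, so that
$$
\Lambda\sm\{0\}\sse\mathcal{Z}(\hat{\mu}).
$$
Thus it suffices to prove $\mathcal{Z}(\hat{\mu})\sse\Q$.

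Next I would feed Assumption * into the two structural lemmas. The tightness of $\{\nu_{>k}\}_{k=1}^\f$ is exactly the hypothesis of Lemma \ref{zero set}, which yields
$$
\mathcal{Z}(\hat{\mu})=\bigcup_{k=1}^\f N_1 N_2\cdots N_k\,\mathcal{Z}(\mathcal{M}_{B_k}).
$$
Since each $B_k$ is a complete residue system modulo $M_k$ satisfying {\it UDZ}, Lemma \ref{function} identifies $\mathcal{Z}(\mathcal{M}_{B_k})=\big\{j/M_k:j\in\Z\sm M_k\Z\big\}$, which is a subset of $\Q$. Because $N_1,\ldots,N_k$ are integers, the dilated set $N_1\cdots N_k\,\mathcal{Z}(\mathcal{M}_{B_k})$ again lies in $\Q$, and taking the countable union preserves this. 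Hence $\mathcal{Z}(\hat{\mu})\sse\Q$.

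Combining the two displays gives $\Lambda\sm\{0\}\sse\Q$, and since $0\in\Q$ we conclude $\Lambda\sse\Q$.

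I do not expect a genuine obstacle within this lemma: its content is essentially a bookkeeping step that chains the orthogonality criterion of Theorem \ref{Q}(i) together with the two preceding lemmas. The real work lives upstream --- in Lemma \ref{zero set}, where tightness is converted via equicontinuity (Lemma \ref{equicontinuous}) into a uniform lower bound on the tails $\hat{\nu}_{>k}$ near the origin, forcing every zero of $\hat{\mu}$ to originate from a finite initial product of the $\mathcal{M}_{B_k}$; and in Lemma \ref{function}, where the {\it UDZ} hypothesis together with the cyclotomic factorization pins down $\mathcal{Z}(\mathcal{M}_{B_k})$ exactly. The only point requiring care here is to verify that Assumption * supplies all the hypotheses of both lemmas, which it does.
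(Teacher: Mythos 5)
Your proposal is correct and follows exactly the paper's own argument: both reduce the claim to $\mathcal{Z}(\hat{\mu})\subseteq\Q$ via Theorem \ref{Q}(i) and the normalization $0\in\Lambda$, then invoke Lemma \ref{zero set} (using tightness) and Lemma \ref{function} (using \emph{UDZ}) to identify the zero set as a countable union of integer dilates of $\{j/M_k:j\in\Z\setminus M_k\Z\}\subseteq\Q$. No differences worth noting.
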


\begin{proof}
For every $k\in \N$, since $B_k$ is a complete residue system modulo $M_k$ satisfying UDZ, by Lemma  \ref{function}, we have
\begin{eqnarray*}
\mathcal{Z}(\mathcal{M}_{B_k})
=\Big\{\frac{j}{M_k}:j\in\Z\setminus M_k\Z \Big\}
\subseteq \Q.
\end{eqnarray*}
Since $\{\nu_{>k}\}_{k=1}^\infty$ is tight, by Lemma \ref{zero set}, we have
\[
\mathcal{Z}(\hat{\mu})=\bigcup_{k=1}^{\f}N_1N_2\cdots N_k\mathcal{Z}(\mathcal{M}_{B_k}).
\]
Immediately, it follows that
\[
\mathcal{Z}(\hat{\mu})\subseteq\Q.
\]
	
Since $(\mu, \Lambda)$ is a spectral pair, by Theorem ~\ref{Q} (i), we have that $\hat{\mu}(\lambda_1-\lambda_2)=0$, for $\lambda_1\ne\lambda_2 \in \Lambda$. Since $0\in\Lambda$, and it immediately follows that
$$
\Lambda\setminus \{0\}\subseteq\mathcal{Z}(\hat{\mu})\subseteq\Q,
$$
and the conclusion holds
\end{proof}
	
Next,  we show  that the spectrum $\Lambda$ may be decomposed into countable pairwise disjoint sets.
Let $(N_1,B_1)$ be the first term in the sequence $\{(N_k,B_k)\}_{k=1}^\infty$.  We write
$$
\Gamma_0=\Big\{0,\frac{N_1}{M_1},\frac{2N_1}{M_1},\cdots,\frac{(M_1-1)N_1}{M_1}\Big\} .
$$
Note that $(\delta_{{N_1}^{-1}B_1},\Gamma_0)$ is a spectral pair since the dimension of $L^2(\delta_{{N_1}^{-1}B_1})=M_1$ and $\Gamma_0$ is the orthogonal set of $\delta_{{N_1}^{-1}B_1}$.
Given a subset $\Lambda\subseteq\Q$. For every $\gamma\in [0,N_1)\cap\Q$,  we write
\begin{equation}\label{def_PG}
P(\gamma)=\{\omega\in\Z:\gamma+N_1\omega\in\Lambda\},
\end{equation}
 and  we have the following decomposition
\begin{equation}\label{eqn_LDC}
\Lambda=\bigcup_{\gamma\in [0,N_1)\cap\Q}\big(\gamma+N_1P(\gamma)\big).
\end{equation}

Unfortunately, this decomposition is not good enough for our purpose, and we have to make a further decomposition of the set $[0,N_1)\cap\Q$. Since the set $[0,\frac{N_1}{M_1})\cap\Q$ is  countable,  we  assume that
\begin{equation}\label{def_gam}
\Gamma=\Big[0,\frac{N_1}{M_1}\Big)\cap\Q=\{0,\gamma_1,\gamma_2,\cdots\}.
\end{equation}

For each  $k\ge1$, $\gamma_k\in \Gamma$, we write
$$
\ \Gamma_k=\gamma_k+\Gamma_0.
$$
Moreover, we have
\begin{equation}\label{eqn_DNQ}
[0,N_1)\cap\Q=\bigcup_{k=0}^\f\Gamma_k.
\end{equation}

We define that
$$
I^\f=\Gamma_0\times \Gamma_{1}\times \Gamma_{2}\times\cdots.
$$
For $\mathbf{i}=(i_0,i_{1},i_{2},\cdots)\in I^\f$, we write
\begin{equation}\label{def_Li}
\Lambda_\mathbf{i}=\bigcup_{k=0}^\f\Big(\frac{i_k}{N_1}+P(i_k)\Big).
\end{equation}
We are ready to use this construction to show that $\Lambda_\mathbf{i}$ is  the spectrum of $\nu$.

To show $(\nu,\Lambda_\mathbf{i})$ is a spectral pair, we need to analyse the othogonality of the sets $P(\gamma)$ and their translations.
	
\begin{lemma}\label{orthogonal 1}
Under Assumption *, suppose that $(\mu, \Lambda)$ is a spectral pair with $0\in\Lambda$. Then for each $\gamma\in[0,N_1)\cap\Q$, $P(\gamma)$ is either an empty set or an orthogonal set of $\nu$.
\end{lemma}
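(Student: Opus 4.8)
The plan is to transfer the orthogonality of $\Lambda$ with respect to $\mu$ down to orthogonality of $P(\gamma)$ with respect to $\nu$, exploiting the factorisation $\hat\mu(\xi)=\mathcal{M}_{B_1}(\xi/N_1)\,\hat\nu(\xi/N_1)$ recorded just before \eqref{nu}, together with the elementary fact that $B_1\subseteq\Z$ forces $\mathcal{M}_{B_1}$ to equal $1$ at every integer. By Theorem~\ref{Q}(i), proving that $P(\gamma)$ is an orthogonal set of $\nu$ amounts to checking that $\hat\nu(\omega_1-\omega_2)=0$ for every pair of distinct $\omega_1,\omega_2\in P(\gamma)$, so I may assume $P(\gamma)$ contains at least two points (the empty set and singletons being orthogonal for trivial reasons).

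So suppose $\omega_1\neq\omega_2$ lie in $P(\gamma)$. By the definition \eqref{def_PG} of $P(\gamma)$, the numbers $\lambda_i=\gamma+N_1\omega_i$ for $i=1,2$ both belong to $\Lambda$, and since $\omega_1\neq\omega_2$ and $N_1\geq M_1\geq 2$ they are distinct. As $(\mu,\Lambda)$ is a spectral pair, Theorem~\ref{Q}(i) yields $\hat\mu(\lambda_1-\lambda_2)=0$, that is, $\hat\mu\big(N_1(\omega_1-\omega_2)\big)=0$. Substituting $\xi=N_1(\omega_1-\omega_2)$ into the factorisation gives
$$
0=\hat\mu\big(N_1(\omega_1-\omega_2)\big)=\mathcal{M}_{B_1}(\omega_1-\omega_2)\,\hat\nu(\omega_1-\omega_2).
$$
The crux is now the observation that $\omega_1-\omega_2$ is a nonzero integer while $B_1\subseteq\Z$, so that $\mathcal{M}_{B_1}(\omega_1-\omega_2)=\frac{1}{\#B_1}\sum_{b\in B_1}e^{-2\pi i b(\omega_1-\omega_2)}=1$. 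Cancelling this factor forces $\hat\nu(\omega_1-\omega_2)=0$, and since $\omega_1,\omega_2$ were arbitrary distinct elements of $P(\gamma)$, Theorem~\ref{Q}(i) applied to $\nu$ shows that $P(\gamma)$ is an orthogonal set of $\nu$.

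I expect no genuine obstacle in this lemma: the entire content is the remark that the character $\mathcal{M}_{B_1}$ is identically $1$ on $\Z$, which lets it be divided out and converts a zero of $\hat\mu$ into a zero of $\hat\nu$. The only point I would verify carefully is that the frequency spacing is exactly $N_1$, so that after applying the factorisation the argument of $\hat\nu$ is precisely the integer $\omega_1-\omega_2$ rather than a rescaled version. It is worth noting that the tightness and \emph{UDZ} hypotheses packaged in Assumption~$*$, as well as the normalisation $0\in\Lambda$, are not actually needed here; they are invoked elsewhere (for instance in Lemma~\ref{zero set} and Lemma~\ref{rational number}) to pin down $\mathcal{Z}(\hat\mu)$, whereas the present statement follows purely from the multiplicativity of the Fourier transform under convolution.
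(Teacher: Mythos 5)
Your proof is correct and is essentially identical to the paper's: both take two distinct $\alpha,\beta\in P(\gamma)$, apply Theorem~\ref{Q}(i) to get $\hat\mu(N_1(\alpha-\beta))=0$, factor $\hat\mu$ through $\mathcal{M}_{B_1}$ and $\hat\nu$, and use that $\mathcal{M}_{B_1}$ equals $1$ at nonzero integers to conclude $\hat\nu(\alpha-\beta)=0$. Your closing remark that Assumption~* and the normalisation $0\in\Lambda$ are not actually used in this particular lemma is accurate, though the paper keeps them as standing hypotheses for the whole section.
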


\begin{proof}
It is sufficient to show that $P(\gamma)$ is orthogonal if $P(\gamma)\ne\emptyset$. For $\alpha\ne\beta\in P(\gamma)\subseteq\Z$, we have
$\gamma+N_1\alpha,\gamma+N_1\beta\in\Lambda$. By Theorem~\ref{Q} (i), this implies that
$$
\hat{\mu}(\gamma+N_1\alpha-\gamma-N_1\beta) =0.
$$
On the other hand, since $\alpha$ and $\beta$ are integers, it follows that
\begin{align*}
\hat{\mu}(\gamma+N_1\alpha-\gamma-N_1\beta)=\mathcal{M}_{B_1}(\alpha-\beta)\cdot\hat{\nu}(\alpha-\beta)=\hat{\nu}(\alpha-\beta).
\end{align*}
Hence  by Theorem~\ref{Q} (i), $\hat{\nu}(\alpha-\beta)=0$ implies that  $P(\gamma)$ is an orthogonal set of $\nu$.
\end{proof}

\begin{lemma}\label{orthogonal 2}
Under Assumption *, suppose that $(\mu, \Lambda)$ is a spectral pair with $0\in\Lambda$. Then given $k\ne j$, for each $i_k\in \Gamma_k$ and each $i_j\in \Gamma_j$,   $\big(\frac{i_k}{N_1}+P(i_k)\big)\bigcup\big(\frac{i_j}{N_1}+P(i_j)\big)$ is either an empty set or an orthogonal set of $\nu$.
\end{lemma}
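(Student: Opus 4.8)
The plan is to invoke Theorem~\ref{Q}~(i): the union $S=\big(\frac{i_k}{N_1}+P(i_k)\big)\cup\big(\frac{i_j}{N_1}+P(i_j)\big)$ is an orthogonal set of $\nu$ precisely when $\hat{\nu}(s_1-s_2)=0$ for every pair of distinct points $s_1,s_2\in S$. I would split such pairs into three types: both lying in $\frac{i_k}{N_1}+P(i_k)$, both lying in $\frac{i_j}{N_1}+P(i_j)$, or one in each. The first two types reduce at once to Lemma~\ref{orthogonal 1}: translating a set by a fixed constant does not change its difference set, and the difference of two elements of $\frac{i_k}{N_1}+P(i_k)$ is an integer difference of elements of $P(i_k)$, so the orthogonality of $P(i_k)$ (resp.\ $P(i_j)$) already forces $\hat{\nu}$ to vanish on these differences. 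Thus the whole problem collapses to the cross pairs, and if both $P(i_k)$ and $P(i_j)$ are empty then $S$ is empty, which is the remaining alternative in the statement.

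For a cross pair I would write $s_1=\frac{i_k}{N_1}+\alpha$ and $s_2=\frac{i_j}{N_1}+\beta$ with $\alpha\in P(i_k)$ and $\beta\in P(i_j)$, so that $i_k+N_1\alpha$ and $i_j+N_1\beta$ both lie in $\Lambda$. Feeding the difference $\xi=(i_k+N_1\alpha)-(i_j+N_1\beta)=(i_k-i_j)+N_1(\alpha-\beta)$ into the factorization $\hat{\mu}(\xi)=\mathcal{M}_{B_1}(\xi/N_1)\hat{\nu}(\xi/N_1)$, and using that $\mathcal{M}_{B_1}$ is $\Z$-periodic (so that the integer shift $\alpha-\beta$ may be discarded inside $\mathcal{M}_{B_1}$) while $\xi/N_1=s_1-s_2$, this simplifies to
\[
\hat{\mu}(\xi)=\mathcal{M}_{B_1}\Big(\tfrac{i_k-i_j}{N_1}\Big)\,\hat{\nu}(s_1-s_2).
\]
Provided the two $\Lambda$-points are distinct, Theorem~\ref{Q}~(i) forces $\hat{\mu}(\xi)=0$, so the desired $\hat{\nu}(s_1-s_2)=0$ follows the moment I know $\mathcal{M}_{B_1}\big(\frac{i_k-i_j}{N_1}\big)\neq 0$.

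The main obstacle, and the only place the hypotheses genuinely enter, is this non-vanishing. Here I would use Lemma~\ref{function}: since $B_1$ satisfies UDZ, its zero set is exactly $\mathcal{Z}(\mathcal{M}_{B_1})=\{m/M_1:m\in\Z\setminus M_1\Z\}$. Writing $i_k=\gamma_k+\frac{aN_1}{M_1}$ and $i_j=\gamma_j+\frac{bN_1}{M_1}$ with $a,b\in\{0,\dots,M_1-1\}$, one gets $\frac{i_k-i_j}{N_1}=\frac{\gamma_k-\gamma_j}{N_1}+\frac{a-b}{M_1}$, where the offset $\frac{\gamma_k-\gamma_j}{N_1}$ lies in $(-\frac{1}{M_1},\frac{1}{M_1})$ and is nonzero because $k\neq j$ forces $\gamma_k\neq\gamma_j$ (the enumeration of $\Gamma$ is injective and $\gamma_k,\gamma_j\in[0,\frac{N_1}{M_1})$). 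Being a nonzero element of $(-\frac{1}{M_1},\frac{1}{M_1})$, this offset is not an integer multiple of $\frac{1}{M_1}$, hence $\frac{i_k-i_j}{N_1}$ is not of the form $m/M_1$ for any integer $m$; in particular it avoids $\mathcal{Z}(\mathcal{M}_{B_1})$, yielding the non-vanishing, and it is also not an integer, which guarantees $i_k+N_1\alpha\neq i_j+N_1\beta$ so that the orthogonality of $\Lambda$ indeed applies. Combining the three cases then shows $S$ is orthogonal whenever it is nonempty, completing the argument.
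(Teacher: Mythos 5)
Your proposal is correct and follows essentially the same route as the paper: reduce to the cross pairs via Lemma~\ref{orthogonal 1}, factor $\hat{\mu}$ through $\mathcal{M}_{B_1}$ and $\hat{\nu}$ using the $\Z$-periodicity of $\mathcal{M}_{B_1}$, and rule out $\frac{i_k-i_j}{N_1}\in\mathcal{Z}(\mathcal{M}_{B_1})$ by observing that the offset $\frac{\gamma_k-\gamma_j}{N_1}$ is a nonzero element of $\big(-\frac{1}{M_1},\frac{1}{M_1}\big)$, so the difference cannot be a multiple of $\frac{1}{M_1}$. Your explicit check that this difference is not an integer (so the two points of $\Lambda$ are genuinely distinct before invoking Theorem~\ref{Q}~(i)) is a detail the paper leaves implicit, and is a welcome addition.
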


\begin{proof}
If  one of $P(i_k)$ and $P(i_j)$ is an empty set, by Lemma \ref{orthogonal 1}, the set $\big(\frac{i_k}{N_1}+P(i_k)\big)\bigcup\big(\frac{i_j}{N_1}+P(i_j)\big)$ is either an empty set or an orthogonal
set of $\nu$, and the conclusion holds.  It remains to show the conclusion holds if both of  $P(i_k)$ and $P(i_j)$ are not  empty sets.
		
For $\alpha\in P(i_k)\subseteq\Z$, $\beta\in P(i_j)\subseteq\Z$, by Theorem \ref{Q} (i), we have that $
\hat{\mu}(i_k+N_1\alpha-i_j-N_1\beta)=0$ and
\begin{align*}
\hat{\mu}(i_k+N_1\alpha-i_j-N_1\beta)
&=\mathcal{M}_{B_1}\Big(\frac{i_k}{N_1}+\alpha-\frac{i_j}{N_1}-\beta\Big)\cdot\hat{\nu}\Big(\frac{i_k}{N_1}+\alpha-\frac{i_j}{N_1}-\beta\Big)\\
&=\mathcal{M}_{B_1}\Big(\frac{i_k}{N_1}-\frac{i_j}{N_1}\Big)\cdot \hat{\nu}\Big(\frac{i_k}{N_1}+\alpha-\frac{i_j}{N_1}-\beta\Big).
\end{align*}
It is sufficient to show that $\mathcal{M}_{B_1}(\frac{i_k}{N_1}-\frac{i_j}{N_1})\neq 0$, which implies that $\hat{\nu}(\frac{i_k}{N_1}+\alpha-\frac{i_j}{N_1}-\beta)=0$, that is $\big(\frac{i_k}{N_1}+P(i_k)\big)\bigcup\big(\frac{i_j}{N_1}+P(i_j)\big)$ is  an orthogonal set of $\nu$.

Since $i_k\in \Gamma_k$ and $i_j\in \Gamma_j$, there exists $t_k, t_j\in\{0,1,\cdots,M_1-1\}$ such that
$$
i_k=\gamma_k+t_k\frac{N_1}{M_1}	\ \text{and} \ i_j=\gamma_j+t_j\frac{N_1}{M_1},
$$
where $\gamma_k,\gamma_j\in\Gamma$.
It is clear that
$$
0<\Big|\frac{\gamma_k-\gamma_j}{N_1}\Big|<\frac{1}{M_1},
$$
and it follows that
$$
\frac{i_k}{N_1}-\frac{i_j}{N_1}=\frac{\gamma_k-\gamma_j}{N_1}+\frac{t_k-t_j}{M_1}\in \bigg(\frac{t_k-t_j-1}{M_1},\frac{t_k-t_j}{M_1}\bigg)\bigcup \bigg(\frac{t_k-t_j}{M_1},\frac{t_k-t_j+1}{M_1}\bigg),
$$

Recall that  $B_1$ is a complete residue system modulo $M_1$ satisfying {\it UDZ}, by Lemma~\ref{function}, we have
$$
\frac{i_k}{N_1}-\frac{i_j}{N_1}\notin \Big\{\frac{j}{M_1}:j\in\Z\setminus M_1\Z\Big\}=\mathcal{Z}(\mathcal{M}_{B_1}).
$$
This  implies that $\mathcal{M}_{B_1}(\frac{i_k}{N_1}-\frac{i_j}{N_1})\neq 0$,
and the conclusion holds.
\end{proof}
Lemma \ref{orthogonal 1} and Lemma \ref{orthogonal 2} immediately imply the following conclusion.	
\begin{corollary}\label{cor_nuLo}
Under Assumption *,  for each $\mathbf{i}=(i_0,i_{1},i_{2},\cdots)\in I^\f$, the set $\Lambda_\mathbf{i}$ is either an empty set or an orthogonal set of $\nu$.
\end{corollary}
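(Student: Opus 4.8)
The plan is to reduce the claim to the pairwise orthogonality criterion of Theorem \ref{Q}(i) and then dispatch each pair of distinct points of $\Lambda_\mathbf{i}$ by whichever of the two preceding lemmas applies. Fix $\mathbf{i}=(i_0,i_1,i_2,\cdots)\in I^\f$; if $\Lambda_\mathbf{i}=\emptyset$ there is nothing to prove, so assume it is nonempty. By Theorem \ref{Q}(i) it then suffices to show that $\hat{\nu}(\lambda-\lambda')=0$ for every pair of distinct points $\lambda,\lambda'\in\Lambda_\mathbf{i}$.

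By the definition $\Lambda_\mathbf{i}=\bigcup_{k=0}^\f\big(\frac{i_k}{N_1}+P(i_k)\big)$, each of $\lambda,\lambda'$ lies in at least one block $\frac{i_k}{N_1}+P(i_k)$, so I would split into two cases. If $\lambda$ and $\lambda'$ lie in a common block with index $k$, I write $\lambda=\frac{i_k}{N_1}+\alpha$ and $\lambda'=\frac{i_k}{N_1}+\beta$ with $\alpha\ne\beta$ in $P(i_k)$; the common shift $\frac{i_k}{N_1}$ cancels in the difference, so $\lambda-\lambda'=\alpha-\beta$, and Lemma \ref{orthogonal 1} (which makes the nonempty set $P(i_k)$ orthogonal) yields $\hat{\nu}(\alpha-\beta)=0$. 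Otherwise $\lambda$ and $\lambda'$ share no common block, so they can be placed in blocks with distinct indices $k\ne j$, and Lemma \ref{orthogonal 2} tells us that $\big(\frac{i_k}{N_1}+P(i_k)\big)\cup\big(\frac{i_j}{N_1}+P(i_j)\big)$ is an orthogonal set of $\nu$, whence $\hat{\nu}(\lambda-\lambda')=0$. Either way the required vanishing holds, so $\Lambda_\mathbf{i}$ is orthogonal.

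This corollary is genuinely immediate, since all the real work sits in the two lemmas, and the only point I would take care over is the exhaustiveness of the case split: because the blocks need not be disjoint, I must phrase the dichotomy as ``$\lambda,\lambda'$ do or do not admit a common block'' rather than fixing the blocks in advance, so that the cross-block case indeed supplies two distinct indices for Lemma \ref{orthogonal 2}. A secondary bookkeeping remark is that in the same-block case no appeal to translation invariance (Corollary \ref{prop}) is needed, because the common shift $\frac{i_k}{N_1}$ drops out of the difference and Lemma \ref{orthogonal 1} applies to the raw set $P(i_k)$ directly. With these two observations the conclusion follows from Lemmas \ref{orthogonal 1} and \ref{orthogonal 2} exactly as stated.
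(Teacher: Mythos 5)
Your proposal is correct and follows exactly the route the paper intends: the paper gives no written proof, stating only that Lemmas \ref{orthogonal 1} and \ref{orthogonal 2} immediately imply the corollary, and your case split (same block via Lemma \ref{orthogonal 1} with the common shift $\frac{i_k}{N_1}$ cancelling, distinct blocks via Lemma \ref{orthogonal 2}) together with Theorem \ref{Q}(i) is precisely the intended argument. Your cautionary remark about non-disjoint blocks is harmless but in fact unnecessary here, since distinct $i_k\in\Gamma_k$ give distinct values of $\frac{i_k}{N_1}$ in $[0,1)$ while $P(i_k)\subseteq\Z$, so the blocks are automatically disjoint.
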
	

To show $(\nu,\Lambda_\mathbf{i})$ is a spectral pair,  it remains  to show  the  completeness of $\Lambda_\mathbf{i}$.
For each $\gamma\in[0,N_1)\cap\Q$, we write
\begin{equation}\label{def_pqg}
p_\gamma(\xi)=\bigg|\mathcal{M}_{B_1}\bigg(\frac{\xi+\gamma}{N_1}\bigg)\bigg|^2, \qquad q_\gamma(\xi)=\sum_{\omega\in P(\gamma)}\bigg|\hat{\nu}\bigg(\frac{\xi+\gamma}{N_1}+\omega\bigg)\bigg|^2.
\end{equation}

\begin{theorem}\label{new spectrum}
Under Assumption *, let $(\mu, \Lambda)$ be a spectral pair with $0\in\Lambda$.  Then for each $\mathbf{i}=(i_0,i_{1},i_{2},\cdots)\in I^\f$, $(\nu,\Lambda_\mathbf{i})$ is a spectral pair.
\end{theorem}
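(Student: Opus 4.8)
The plan is to use the completeness $Q_{\mu,\Lambda}\equiv 1$ (Theorem \ref{Q}(iii)) together with the factorization $\hat\mu(\xi)=\mathcal{M}_{B_1}(\xi/N_1)\hat\nu(\xi/N_1)$ to extract completeness of $\Lambda_{\mathbf{i}}$ for $\nu$. First I would expand $Q_{\mu,\Lambda}$ along the decomposition $\Lambda=\bigcup_{\gamma\in[0,N_1)\cap\Q}(\gamma+N_1P(\gamma))$ of \eqref{eqn_LDC}. Writing $\lambda=\gamma+N_1\omega$ with $\omega\in P(\gamma)$ and using that $\mathcal{M}_{B_1}$ is $1$-periodic (so $\mathcal{M}_{B_1}((\xi+\gamma)/N_1+\omega)=\mathcal{M}_{B_1}((\xi+\gamma)/N_1)$ for $\omega\in\Z$), each block collapses to $p_\gamma(\xi)q_\gamma(\xi)$ with $p_\gamma,q_\gamma$ as in \eqref{def_pqg}. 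Regrouping $[0,N_1)\cap\Q=\bigcup_{k\ge0}\Gamma_k$ via \eqref{eqn_DNQ} then yields
\[
1\equiv Q_{\mu,\Lambda}(\xi)=\sum_{k=0}^{\f}S_k(\xi),\qquad S_k(\xi):=\sum_{i\in\Gamma_k}p_i(\xi)q_i(\xi).
\]

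The key structural input is that within each block $\Gamma_k=\gamma_k+\Gamma_0$ the weights $p_i$ are normalized. Since $(\delta_{N_1^{-1}B_1},\Gamma_0)$ is a spectral pair and $\hat\delta_{N_1^{-1}B_1}(\cdot)=\mathcal{M}_{B_1}(\cdot/N_1)$, Theorem \ref{Q}(iii) gives $\sum_{t=0}^{M_1-1}|\mathcal{M}_{B_1}(\zeta+t/M_1)|^2\equiv1$, whence $\sum_{i\in\Gamma_k}p_i(\xi)=1$ for every $k$. Thus each $S_k(\xi)$ is a convex combination (with weights $p_i(\xi)$) of the numbers $\{q_i(\xi):i\in\Gamma_k\}$. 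On the other hand, computing $Q_{\nu,\Lambda_{\mathbf{i}}}$ directly from \eqref{def_Li}, and noting that the sets $\frac{i_k}{N_1}+P(i_k)$ are pairwise disjoint, I get $Q_{\nu,\Lambda_{\mathbf{i}}}(\xi)=\sum_{k}q_{i_k}(N_1\xi)$. By Corollary \ref{cor_nuLo} every $\Lambda_{\mathbf{j}}$ with $\mathbf{j}\in I^\f$ is an orthogonal set of $\nu$, so Theorem \ref{Q}(ii) gives $\sum_{k}q_{j_k}(\eta)\le1$ for every selection $\mathbf{j}$ and every $\eta$.

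These two facts are combined by an extremal argument applied pointwise. Fix $\eta$ outside the countable set $E$ where some weight $p_i(\eta)$ vanishes, i.e.\ where some $(\eta+i)/N_1\in\mathcal{Z}(\mathcal{M}_{B_1})$, which is a countable union of discrete sets by Lemma \ref{function}. For each $k$ pick $i_k^\ast\in\Gamma_k$ maximizing $q_i(\eta)$; the selection $\mathbf{i}^\ast$ gives $\sum_k\max_{i\in\Gamma_k}q_i(\eta)\le1$, while $S_k(\eta)\le\max_{i\in\Gamma_k}q_i(\eta)$ forces $1=\sum_kS_k(\eta)\le\sum_k\max_{i}q_i(\eta)\le1$. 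Hence all inequalities are equalities, so $S_k(\eta)=\max_{i\in\Gamma_k}q_i(\eta)$ for every $k$; as $S_k(\eta)$ is a convex combination of the $q_i(\eta)$ with strictly positive weights, every $q_i(\eta)$ with $i\in\Gamma_k$ equals this common maximum. In particular $q$ is constant on each $\Gamma_k$, so for the prescribed $\mathbf{i}$ we obtain $\sum_k q_{i_k}(\eta)=\sum_kS_k(\eta)=1$. Thus $Q_{\nu,\Lambda_{\mathbf{i}}}(\xi)=1$ for all $\xi$ in the dense set $\R\setminus N_1^{-1}E$, and since $\Lambda_{\mathbf{i}}$ is orthogonal, Corollary \ref{Q'} upgrades this to completeness, giving that $(\nu,\Lambda_{\mathbf{i}})$ is a spectral pair.

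I expect the main obstacle to be the constancy of $q$ on each block $\Gamma_k$: it is precisely what makes the single-representative set $\Lambda_{\mathbf{i}}$ complete for \emph{every} choice $\mathbf{i}$, and it is invisible from orthogonality alone. The extremal argument above is what delivers it, but it can be run only at points where $\mathcal{M}_{B_1}$ does not vanish, so that the convex weights are genuinely positive; this is why the final step is carried out on the dense complement of a countable set and completed through Corollary \ref{Q'} rather than everywhere at once. A secondary point worth verifying carefully is the pairwise disjointness of the blocks $\frac{i_k}{N_1}+P(i_k)$, which underlies both the clean formula $Q_{\nu,\Lambda_{\mathbf{i}}}=\sum_kq_{i_k}(N_1\,\cdot\,)$ and the orthogonality supplied by Corollary \ref{cor_nuLo}.
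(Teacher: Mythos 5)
Your proposal is correct and follows essentially the same route as the paper: expand $Q_{\mu,\Lambda}\equiv 1$ along the decomposition \eqref{eqn_LDC}--\eqref{eqn_DNQ} into $\sum_k\sum_{\gamma\in\Gamma_k}p_\gamma q_\gamma$, use that $(\delta_{N_1^{-1}B_1},\Gamma_k)$ is a spectral pair to normalize the weights, run the maximizing-selection argument against the orthogonality bound from Corollary \ref{cor_nuLo} and Theorem \ref{Q}(ii) to force $q$ to be constant on each block, and finish with Corollary \ref{Q'}. The only cosmetic difference is that you exclude the countable set where some $p_\gamma$ vanishes, whereas the paper works on $\R\setminus\Q$; these coincide in effect since the zeros of $\mathcal{M}_{B_1}$ are rational.
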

\begin{proof}
Since $(\mu,\Lambda)$ is a spectral pair, by Theorem \ref{Q} (iii),
$$
Q_{\mu,\Lambda}(\xi)=\sum_{\lambda\in \Lambda}|\hat{\mu}(\xi+\lambda)|^2=1.
$$
Moreover, by Lemma \ref{rational number}, we have that $\Lambda\subseteq\Q$. By \eqref{eqn_LDC} and \eqref{eqn_DNQ}, we have
\begin{align*}
\sum_{\lambda\in \Lambda}|\hat{\mu}(\xi+\lambda)|^2
&=\sum_{\gamma\in[0,N_1)\cap\Q}\sum_{\omega\in P(\gamma)}|\hat{\mu}(\xi+\gamma+N_1\omega)|^2\\
&=\sum_{\gamma\in[0,N_1)\cap\Q}\sum_{\omega\in P(\gamma)}\bigg|\mathcal{M}_{B_1}\bigg(\frac{\xi+\gamma}{N_1}+\omega\bigg)\bigg|^2\bigg|\hat{\nu}\bigg(\frac{\xi+\gamma}{N_1}+\omega\bigg)\bigg|^2\\
&=\sum_{\gamma\in[0,N_1)\cap\Q}\bigg|\mathcal{M}_{B_1}\bigg(\frac{\xi+\gamma}{N_1}\bigg)\bigg|^2\sum_{\omega\in P(\gamma)}\bigg|\hat{\nu}\bigg(\frac{\xi+\gamma}{N_1}+\omega\bigg)\bigg|^2\\
&=\sum_{k=0}^\f\sum_{\gamma\in \Gamma_k}\bigg|\mathcal{M}_{B_1}\bigg(\frac{\xi+\gamma}{N_1}\bigg)\bigg|^2\sum_{\omega\in P(\gamma)}\bigg|\hat{\nu}\bigg(\frac{\xi+\gamma}{N_1}+\omega\bigg)\bigg|^2,
\end{align*}
for all $\xi\in\R$.

By~\eqref{def_pqg}, we have that
\begin{equation}\label{eqn_p1}
\sum_{k=0}^\f\sum_{\gamma\in\Gamma_k}p_{\gamma}(\xi)q_{\gamma}(\xi)=\sum_{\lambda\in \Lambda}|\hat{\mu}(\xi+\lambda)|^2= 1,
\end{equation}
for all $\xi\in\R$.
Since $(\delta_{{N_1}^{-1}B_1},\Gamma_0)$ is a spectral pair, by Corollary \ref{prop}, we have $(\delta_{{N_1}^{-1}B_1},\Gamma_k)$ is a spectral pair, for every integer $k\ge0$.  By Theorem \ref{Q} (iii), we have
\[
\sum_{\gamma\in\Gamma_k}p_{\gamma}(\xi)=1,
\]
for all $\xi\in\R$. Since $B_1$ is a complete residue system modulo $M_1$ satisfying {\it UDZ}, by Lemma \ref{function}, the zeros of $\mathcal{M}_{B_1}$ are all rational numbers. By choosing  irrational  $\xi$, we have  that $p_{\gamma}(\xi)>0$ for all $\gamma\in[0,N_1)\cap\Q$.

Fix $\xi_0 \in\R$. For k=0,1,2\ldots, we choose $j_k\in \Gamma_k$ such that
\begin{equation}\label{def_qmax}
q_{j_k}(\xi_0)=\max_{\gamma\in \Gamma_k}q_{\gamma}(\xi_0).
\end{equation}
Let $\mathbf{j}=(j_0,j_1,\ldots ,j_k,\ldots) $ where $j_k$ is given by \eqref{def_qmax}. It is clear that $\mathbf{j}\in I^\f$.  By Corollary \ref{cor_nuLo}, $\Lambda_\mathbf{i}$ is either an empty set or orthogonal set of $\nu$, for all $\mathbf{i}\in I^\f$, hence $\Lambda_{\mathbf{j}}$ is an orthogonal set of $\nu$. By Theorem \ref{Q} (ii), this implies that
$$
0\leq \sum_{\lambda\in \Lambda_{\mathbf{j}}} \bigg|\hat{\nu}\bigg(\frac{\xi_0}{N_1} +\lambda\bigg)\bigg|^2\leq 1,
$$
and it is equivalent to
\[
0\le\sum_{k=0}^\f\max_{\gamma\in \Gamma_k}q_{\gamma}(\xi_0)\le1.
\]
Thus for each $\xi\in\R \setminus \Q$,
\begin{equation}\label{1}
\begin{split}
\sum_{k=0}^\f\sum_{\gamma\in\Gamma_k}p_{\gamma}(\xi)q_{\gamma}(\xi)
&\le\sum_{k=0}^\f\sum_{\gamma\in\Gamma_k}p_{\gamma}(\xi)\max_{\gamma\in\Gamma_k}q_{\gamma}(\xi)\\
&=\sum_{k=0}^\f\max_{\gamma\in \Gamma_k}q_{\gamma}(\xi)\\
&\le1,
\end{split}
\end{equation}
Since $\sum_{k=0}^\f\sum_{\gamma\in\Gamma_k}p_{\gamma}(\xi)q_{\gamma}(\xi)= 1,$  by \eqref{eqn_p1}, it follows that
\begin{align}\label{2}
\sum_{k=0}^\f\max_{\gamma\in \Gamma_k}q_{\gamma}(\xi)=1,
\end{align}
for all $\xi\in\R \setminus \Q$.	
Since $p_{\gamma}(\xi)>0$ for all $\xi\in\R\setminus\Q$ and all $\gamma\in[0,N_1)\cap\Q$, by \eqref{1}, we also have
\begin{align}\label{3}
q_{\gamma'}(\xi)=\max_{\gamma\in\Gamma_k}q_{\gamma}(\xi),
\end{align}
for all $\gamma'\in\Gamma_k$, all $k\ge0$ and all $\xi\in\R \setminus \Q$.

To show that for every $\mathbf{i}\in I^\f$, $(\nu,\Lambda_\mathbf{i})$ is a spectral pair, it is equivalent to show that
$$
\sum_{\lambda\in\Lambda_\mathbf{i}}\bigg|\hat{\nu}\bigg(\frac{\xi}{N_1}+\lambda\bigg)\bigg|^2=1.
$$
Combining \eqref{def_Li}, \eqref{2} and \eqref{3} with  Lemma \ref{orthogonal 1} and Lemma \ref{orthogonal 2} together, we have
\begin{eqnarray*}
\sum_{\lambda\in\Lambda_\mathbf{i}}\bigg|\hat{\nu}\bigg(\frac{\xi}{N_1}+\lambda\bigg)\bigg|^2 &=& \sum_{\lambda\in \bigcup_{k\in\N}\big(\frac{i_k}{N_1}+P(i_k)\big)}\bigg|\hat{\nu}\bigg(\frac{\xi}{N_1}+\lambda\bigg)\bigg|^2 \\
&=& \sum_{k=0}^\f\sum_{\omega\in P(\gamma)}\Big|\hat{\nu}\Big(\frac{\xi}{N_1}+\frac{i_k}{N_1}+\omega\Big)\Big|^2   \\
&=&\sum_{k=0}^\f q_{i_k}(\xi)  \\
&=&1,
\end{eqnarray*}
for all $\xi\in\R \setminus \Q$. By Corollary \ref{Q'}, $(\nu,\Lambda_\mathbf{i})$ is a spectral pair, for all $\mathbf{i}\in I^\f$.
\end{proof}

\begin{corollary}\label{cor_P(0)}
Under Assumption *,  for each $\gamma\in \Gamma_0$, $P(\gamma)\ne\emptyset$.
\end{corollary}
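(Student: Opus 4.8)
The plan is to extract the non-emptiness of each $P(\gamma)$ from the identities already proved inside Theorem~\ref{new spectrum}, rather than re-running any orthogonality or completeness argument. Throughout I work under the standing hypotheses of that theorem, namely Assumption~* together with $(\mu,\Lambda)$ being a spectral pair with $0\in\Lambda$, so that the functions $p_\gamma,q_\gamma$ from~\eqref{def_pqg} and in particular the relation~\eqref{3} are available. The single case I would dispose of immediately is $\gamma=0$: since $0\in\Lambda$ and $0=0+N_1\cdot 0$, the definition~\eqref{def_PG} gives $0\in P(0)$, so $P(0)\neq\emptyset$. This is the anchor on which the rest of the argument rests.

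Next I would promote this one non-empty set to all of $\Gamma_0$ via~\eqref{3}. First, keeping only the $\omega=0$ term in the definition of $q_0$ in~\eqref{def_pqg} gives $q_0(\xi)\ge\big|\hat{\nu}(\xi/N_1)\big|^2$. Since $\hat{\nu}$ is continuous with $\hat{\nu}(0)=1$, there is a neighbourhood of $0$ on which $\hat{\nu}(\,\cdot\,/N_1)$ does not vanish; because the irrationals are dense I may pick an irrational $\xi_0$ in this neighbourhood, and then $q_0(\xi_0)>0$. Now I invoke~\eqref{3} with $k=0$: for every irrational $\xi$ all the values $q_\gamma(\xi)$ with $\gamma\in\Gamma_0$ coincide with the common maximum $\max_{\gamma'\in\Gamma_0}q_{\gamma'}(\xi)$. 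Evaluating at $\xi_0$ yields $q_\gamma(\xi_0)=q_0(\xi_0)>0$ for every $\gamma\in\Gamma_0$. Since $q_\gamma(\xi_0)=\sum_{\omega\in P(\gamma)}\big|\hat{\nu}(\tfrac{\xi_0+\gamma}{N_1}+\omega)\big|^2$, a strictly positive value forces the index set $P(\gamma)$ to be non-empty, which is exactly the assertion.

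The argument is short and I do not expect a serious obstacle; the only points demanding care are structural rather than computational. One must remember that~\eqref{3} is asserted only on $\R\setminus\Q$, which is precisely why the test point $\xi_0$ has to be chosen irrational, and one must keep the spectral-pair hypothesis of Theorem~\ref{new spectrum} genuinely in force so that~\eqref{3} may legitimately be cited. Conceptually, the content is that the easy membership $0\in P(0)$ propagates across the whole coset block $\Gamma_0$ because the proof of Theorem~\ref{new spectrum} has already forced the $q_\gamma$ to be constant on each $\Gamma_k$ at irrational arguments.
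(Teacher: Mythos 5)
Your proof is correct and follows essentially the same route as the paper's: anchor the argument with $P(0)\neq\emptyset$ coming from $0\in\Lambda$, show $q_0>0$ at a test point, and propagate the positivity across all of $\Gamma_0$ via \eqref{3}. In fact your insistence on an \emph{irrational} test point $\xi_0$ near $0$ is slightly more careful than the paper's own proof, which evaluates \eqref{3} at the rational point $\xi=0$ even though \eqref{3} is only asserted for $\xi\in\R\setminus\Q$.
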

\begin{proof}
Recall that  $P(\gamma)=\{\omega\in\Z:\gamma+N_1\omega\in\Lambda\}$ for every $\gamma\in [0,N_1)\cap\Q$. Since  $0\in\Lambda$, we have that  $P(0)\ne\emptyset$. By \eqref{def_pqg}, this implies that
\[
q_0(\xi)=\sum_{\omega\in P(0)}\Big|\hat{\nu}\bigg(\frac{\xi}{N_1}+\omega\bigg)\Big|^2>0,
\]
for $\xi=0$.
By \eqref{3}, we have
$$
q_0(0)=q_{\frac{N_1}{M_1}}(0)=\cdots=q_{\frac{(M_1-1)N_1}{M_1}}(0)>0,
$$
 and it follows that $P(\gamma)\ne\emptyset$ for all $\gamma\in \Gamma_0$.
\end{proof}

\section{Properties of spectral infinite convolutions} \label{sec_pfthm}
In this section, we study the properties of spectral infinite convolutions, and we give the proofs of Theorem \ref{consecutive digit set}, Theorem \ref{consecutive set} and Corollary \ref{f}.
	
\begin{proof}[Proof of Theorem~\ref{consecutive digit set}]
By Corollary \ref{prop}, without loss of generality, suppose that $(\mu,\Lambda)$ is a spectral pair with $0\in\Lambda$. For each $\mathbf{i}=(0,\gamma_1,\gamma_2,\cdots)$, by Theorem \ref{new spectrum}, $(\nu,\Lambda_\mathbf{i})$ is a spectral pair.
				
To show $M_2|N_2$, we decompose $\Lambda_{\mathbf{i}}$ by applying the similar approach to $\eqref{eqn_LDC}$, where we construct a spectrum for $\nu=\nu_{>1}$ by decomposing the spectrum $\Lambda$ of $\mu$ . First, we  write
$$
\Gamma'_0=\Big\{0,\frac{N_2}{M_2},\cdots,\frac{(M_2-1)N_2}{M_2}\Big\}.
$$
and
$$
P'(\gamma)=\{\omega\in\Z:\gamma+N_2\omega\in\Lambda_\mathbf{i}\},
$$
and it is clear that
$$
\Lambda_{\mathbf{i}}=\bigcup_{\gamma\in [0,N_2)\cap\Q}\{\gamma+N_2P'(\gamma)\}.
$$

Since $0\in\Lambda_{\mathbf{i}}$, by the same proof as Corollary \ref{cor_P(0)}, we have
$$
P'(\gamma)=\{\omega\in\Z:\gamma+N_2\omega\in\Lambda_\mathbf{i}\}\ne\emptyset,
$$
for all $\gamma\in\Gamma'_0$. Thus there exist $\omega_1, \omega_2,\cdots,\omega_{M_2-1}\in\Z$ such that, for all $k\in\{1,2,\cdots,M_2-1\}$, we have that
\[
\frac{kN_2}{M_2}+N_2\omega_k\in\Lambda_\mathbf{i}.
\]
On the other hand, for all $k\in\{1,2,\cdots,M_2-1\}$, by \eqref{def_Li}, this is equivalent to
\[
\frac{kN_2}{M_2}+N_2\omega_k\in\Lambda_\mathbf{i}=\bigcup_{j=0}^\f\bigg(\frac{\gamma_j}{N_1} +P(\gamma_j)\bigg),
\]
where $\gamma_j\in \Gamma$ given by \eqref{def_gam}, and $P(\cdot)$ given by \eqref{def_PG}.   Recall that $P(\gamma)\subset \Z$, for each $k\in\{1,2,\cdots,M_2-1\}$, there exists $\gamma_k'\in\Gamma$ such that
\[
\frac{kN_2}{M_2}-\frac{\gamma_k'}{N_1}\in\Z.
\]		

Suppose that   $M_2|N_2$ is not true, that is $\frac{N_2}{M_2}\notin\Z$. Let $m=\frac{M_2}{\gcd(N_2,M_2)}$. It is clear that $m\ge2$. Then for each $k\in\{1,2,\cdots,m-1\}$,  there exists $\gamma''_k\in \Gamma=[0,\frac{N_1}{M_1})\cap\Q$ such that
\[
\frac{k}{m}-\frac{\gamma''_k}{N_1}\in\Z,
\]
and it implies that
\begin{equation*}
\frac{k}{m}=\frac{\gamma''_k}{N_1}.
\end{equation*}
In particular, we have that
\begin{equation}\label{eqn_mkg}
\frac{m-1}{m}=\frac{\gamma''_{m-1}}{N_1}.
\end{equation}

However, since $m\geq 2$ and $\gamma''_k\in \Gamma=[0,\frac{N_1}{M_1})\cap\Q $, we have that
\[
\frac{m-1}{m}\ge \frac{1}{2}\ge\frac{N_1}{M_1}\cdot\frac{1}{N_1}>\frac{\gamma''_{m-1}}{N_1},
\]		
which contradicts ~\eqref{eqn_mkg}. Therefore, we obtain that $M_2|N_2$.

Since $\nu$ is also a spectral infinite convolution generated by complete residue systems $\{B_k\}_{k=2}^\infty$, we replace the spectral pair $(\mu, \Lambda)$ by $(\nu, \Lambda_\mathbf{i})$ and repeat the same process, and we have $M_3|N_3$. Therefore by induction, we have $M_k|N_k$ for all $k\geq 2$.
\end{proof}

\begin{proof}[Proof of Theorem \ref{consecutive set}]
If $M_k|N_k$, for all $k\ge2$, by Corollary \ref{change N}, without loss of generality, suppose that $M_k|N_k$, for all $k\in\N$. Thus $\{(N_k,B_k)\}$ is a sequence of admissible pair, by Theorem \ref{thm_MZ}, the infinite convolution $\mu$ is a spectral measure. We only need to prove the necessity.
   	
Since $B_k=\{0,1,2,\cdots,M_k-1\}$  and $N_k\ge M_k\ge2$, for all $k\in \N$, it is clear that
$f_{B_k}=1$ and $\nu_{>k}$ is supported on $[0,1]$ for every $k\in\N$. Immediately, this implies that $B_k$ satisfies {\it UDZ}, for all $k\in\N$ and $\{\nu_{>k}\}_{k=1}^\infty$ is tight.
Since the infinite convolution $\mu$ is a spectral measure, by Theorem \ref{consecutive digit set}, we have $M_k|N_k$, for all $k\ge2$.
\end{proof}

\begin{proof}[Proof of Corollary \ref{f}]
By Theorem \ref{consecutive digit set}, it is sufficient to show that $B_k$ satisfies {\it UDZ}, for all $k\in\N$.
	
Since
$$
f_{B_k}(x)=\sum_{i=0}^{2n_k}(-x)^i=\frac{1+x^{2n_k+1}}{1+x},
$$
it is clear that
$$
\mathcal{Z}(f_{B_k}\circ e^{-2\pi i x})\subseteq\bigg\{\frac{2j+1}{4n_k+2}:j\in\Z\bigg\}\subseteq\bigg\{\frac{j}{M_k}:j\in\Z\setminus M_k\Z\bigg\},
$$
which implies that $B_k$ satisfying {\it UDZ}, for all $k\in\N$.
\end{proof}

\section{Spectral measure supported on a non-compact set}\label{sec_noncpt}
Note that the tightness of $\{\nu_{>k}\}_{k=1}^\infty$ plays an important role in the spectrality of measures, but it is often hard to verify. In this section, we show that the tightness of $\{\nu_{>k}\}_{k=1}^\infty$  is guaranteed by the remainder bounded condition.

Recall that
a sequence $\{( N_k,B_k)\}_{k=1}^\infty $ satisfies remainder bounded condition if
\[\sum_{k=1}^{\infty} \frac{\# B_{k,2}}{\# B_k} < \infty,\]
where $B_{k,1}=B_k \cap \{0,1,\cdots,N_k-1\}$ and $B_{k,2}=B_k \backslash B_{k,1}.$

\begin{theorem}\label{existence}\cite{MZ24}
Let $\{( N_n,B_n)\}_{n=1}^\infty $ be a sequence satisfying remainder bounded condition. Then the infinite convolution $\mu$ exists.
\end{theorem}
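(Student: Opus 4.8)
The plan is to work with Fourier transforms and invoke L\'evy's continuity theorem. Since each $\mu_k$ is a convolution of uniform discrete probability measures, its Fourier transform factors as
$$
\hat{\mu}_k(\xi)=\prod_{j=1}^{k}\mathcal{M}_{B_j}\Big(\frac{\xi}{N_1N_2\cdots N_j}\Big).
$$
First I would show this finite product converges, as $k\to\f$, to a function $\phi(\xi)$ that is continuous and satisfies $\phi(0)=1$; L\'evy's continuity theorem then produces a Borel probability measure $\mu$ with $\hat{\mu}=\phi$ to which $\mu_k$ converges weakly, which is precisely the assertion that the infinite convolution exists.

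For the convergence of the infinite product it suffices to prove that
$$
\sum_{j=1}^{\f}\Big|\mathcal{M}_{B_j}\Big(\frac{\xi}{N_1\cdots N_j}\Big)-1\Big|<\f
$$
locally uniformly in $\xi$, since the product $\prod(1+a_j)$ converges absolutely and locally uniformly whenever $\sum|a_j|$ does, and the limit is then continuous with value $1$ at the origin. To obtain this I would write $\eta_j=\xi/(N_1\cdots N_j)$ and use the decomposition $B_j=B_{j,1}\cup B_{j,2}$, giving
$$
\Big|\mathcal{M}_{B_j}(\eta_j)-1\Big|\le\frac{1}{\#B_j}\sum_{b\in B_{j,1}}\big|e^{-2\pi i b\eta_j}-1\big|+\frac{1}{\#B_j}\sum_{b\in B_{j,2}}\big|e^{-2\pi i b\eta_j}-1\big|.
$$

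For the first sum I would apply the elementary bound $|e^{-2\pi i b\eta_j}-1|\le 2\pi|b\,\eta_j|$; since $0\le b\le N_j-1$ for $b\in B_{j,1}$, this yields $|b\,\eta_j|\le|\xi|/(N_1\cdots N_{j-1})\le|\xi|/2^{\,j-1}$ (recalling $N_i\ge 2$), and summing over $j$ gives a finite, geometrically controlled quantity. For the second sum, where the elements of $B_{j,2}$ may be arbitrarily large so that the linear bound is useless, I would instead use the crude estimate $|e^{-2\pi i b\eta_j}-1|\le 2$, producing a contribution bounded by $2\,\#B_{j,2}/\#B_j$; summing over $j$ this is finite \emph{exactly} by the remainder bounded condition. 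Adding the two estimates establishes the locally uniform summability and hence the existence of $\mu$.

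The main obstacle is precisely the control of the ``bad'' part $B_{j,2}$: because its elements can have arbitrarily large modulus, one cannot render $|e^{-2\pi i b\eta_j}-1|$ small by exploiting the smallness of $\eta_j$, so the naive phase estimate breaks down. The role of the remainder bounded condition is to guarantee that these uncontrollable terms are rare enough, in the averaged sense $\sum_j\#B_{j,2}/\#B_j<\f$, that their aggregate effect on the infinite product remains summable. By contrast, the ``good'' part $B_{j,1}$ is harmless, since the geometric growth of $N_1\cdots N_{j-1}$ forces the corresponding phases to be uniformly small for large $j$.
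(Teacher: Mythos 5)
The paper does not actually prove this statement: Theorem~\ref{existence} is imported verbatim from the companion preprint \cite{MZ24}, so there is no in-text proof to compare against. Judged on its own, your argument is correct and complete in outline. The splitting of $\mathcal{M}_{B_j}(\eta_j)-1$ according to $B_j=B_{j,1}\cup B_{j,2}$, with the linear phase bound $|e^{-2\pi i b\eta_j}-1|\le 2\pi|b\eta_j|\le 2\pi|\xi|/(N_1\cdots N_{j-1})$ on the bounded part and the trivial bound $2$ on the remainder part, is exactly the mechanism that makes RBC the right hypothesis, and the passage from locally uniform summability of $\sum_j|\mathcal{M}_{B_j}(\eta_j)-1|$ to weak convergence of $\mu_k$ via L\'evy's continuity theorem is standard. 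Two small points deserve to be made explicit. First, the geometric bound $N_1\cdots N_{j-1}\ge 2^{j-1}$ uses the standing assumption $N_k\ge \#B_k\ge 2$, which is implicit in the paper's framework but not stated in the theorem itself; you should record it. Second, ``the product converges absolutely'' should be phrased as convergence of the partial products: individual factors $\mathcal{M}_{B_j}(\eta_j)$ may vanish at some $\xi$, so the limit can be $0$ there, but the partial products are still uniformly Cauchy on compacta because they are bounded by $\exp\bigl(\sum_j|a_j|\bigr)$ and successive differences are controlled by $|a_{k+1}|$; this yields a continuous limit equal to $1$ at the origin, which is all that L\'evy's theorem requires. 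With these clarifications your proof stands as a valid, self-contained substitute for the external citation.
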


\begin{lemma}\label{lem_tight}
Let $\mu$ and $\nu_{>k}$ be given by \eqref{infinite-convolution} and \eqref{nu_n} respectively with respect to $\{(N_k,B_k)\}_{k=1}^\infty $. If $\{(N_k,B_k)\}_{k=1}^\infty $ satisfies remainder bounded condition, then $\{\nu_{>k}\}_{k=1}^\infty$ is tight.
\end{lemma}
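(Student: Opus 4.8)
The plan is to realize $\nu_{>k}$ as the distribution of a random digit expansion and to separate the \emph{small} digits (those lying in $B_{j,1}$) from the \emph{large} ones (those in $B_{j,2}$). Writing a $\nu_{>k}$-distributed point as
\[
X_{>k}=\sum_{j=k+1}^{\f}\frac{b_j}{N_{k+1}N_{k+2}\cdots N_j},
\]
where the digits $b_j$ are chosen independently and uniformly from $B_j$, I first observe that on the event $E_k=\{b_j\in B_{j,1}\text{ for all }j>k\}$ one has $X_{>k}\in[0,1]$: indeed $b_j\in B_{j,1}\sse\{0,1,\dots,N_j-1\}$ gives $0\le b_j\le N_j-1$, and the telescoping identity $\sum_{j=k+1}^{\f}\frac{N_j-1}{N_{k+1}\cdots N_j}=1$ (valid since each $N_j\ge 2$, so the tail tends to $0$) bounds the sum by $1$. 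Consequently $\{X_{>k}\notin[0,1]\}\sse E_k^{c}$, and by countable subadditivity
\[
\nu_{>k}\big(\R\sm[0,1]\big)\le \PP(E_k^{c})\le\sum_{j=k+1}^{\f}\frac{\#B_{j,2}}{\#B_j}.
\]
This one-sided tail estimate is the only place the remainder bounded condition is used.

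With this bound in hand, uniform tightness follows by splitting the index set into two regimes. Fix $\ep>0$. Since $\sum_{k=1}^{\f}\#B_{k,2}/\#B_k<\f$ by the remainder bounded condition, choose $m$ so large that $\sum_{j>m}\#B_{j,2}/\#B_j<\ep$; then for \emph{every} $k\ge m$ the estimate above yields $\nu_{>k}([0,1])>1-\ep$, with a bound uniform in $k$. For the finitely many remaining indices $k\in\{1,\dots,m-1\}$, each $\nu_{>k}$ is a single Borel probability measure on the Polish space $\R$, hence individually tight by Ulam's theorem, so there is a compact set $K_k$ with $\nu_{>k}(K_k)>1-\ep$. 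Taking
\[
K=[0,1]\cup\bigcup_{k=1}^{m-1}K_k,
\]
a finite union of compact sets and therefore compact, we obtain $\nu_{>k}(K)>1-\ep$ for every $k\ge1$, which is exactly the required uniform tightness. (Each $\nu_{>k}$ is well defined as a probability measure because the shifted sequence $\{(N_{k+j},B_{k+j})\}_{j=1}^{\f}$ again satisfies the remainder bounded condition, so its infinite convolution exists by Theorem \ref{existence}.)

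The step I expect to be the main obstacle is precisely this uniformity over $k$. A naive union bound over all digit positions controls $\nu_{>k}(\R\sm[0,1])$ only by the \emph{full} sum $S=\sum_{j\ge1}\#B_{j,2}/\#B_j$, which need not be small, so by itself it cannot force the mass into a fixed compact set; moreover the large digits $B_{j,2}$ are unbounded as $j\to\f$, so one cannot simply enlarge $[0,1]$ to swallow all possible excursions at once. The resolution is the two-regime split: the tail $\sum_{j>m}$ is genuinely small and handles all large $k$ simultaneously, while the finitely many small values of $k$ are absorbed by the automatic tightness of each individual probability measure on $\R$ (where the large digits, now ranging over the \emph{finite} sets $B_{j,2}$, cause no difficulty). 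A minor point to verify carefully is the telescoping support bound for the good part, together with the observation that enlarging a digit from $B_{j,1}$ to $B_{j,2}$ can only move mass out of $[0,1]$, so that the one-sided inclusion $\{X_{>k}\notin[0,1]\}\sse E_k^{c}$ holds regardless of whether some $B_{j,1}$ happens to be empty.
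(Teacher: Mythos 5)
Your proposal is correct and follows essentially the same route as the paper's proof: bound $\nu_{>k}(\R\setminus[0,1])$ by a union bound over the positions where a digit falls in $B_{j,2}$, use the tail of the RBC series to handle all large $k$ uniformly, and absorb the finitely many remaining indices via the individual tightness of each probability measure, taking $K=[0,1]\cup\bigcup_{k<m}K_k$. Your write-up is if anything slightly more careful than the paper's (explicit telescoping bound for the good event, explicit appeal to Ulam's theorem), but the argument is the same.
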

\begin{proof}
First, we prove the existence of $\{\nu_{>k}\}_{k=1}^\infty$.	
Since $\{(N_k,B_k)\}_{k=1}^\infty $ satisfies remainder bounded condition, we have
\[
\sum_{k=1}^{\infty} \frac{\# B_{k,2}}{\# B_k} < \infty,
\]
where $B_{k,1}=B_k \cap \{0,1,\cdots,N_k-1\}$ and $B_{k,2}=B_k \backslash B_{k,1}.$
For all $j\ge1$, since the sequence $\{(N_k,B_k)\}_{k=j+1}^\infty $ satisfies remainder bounded condition,  by Theorem \ref{existence}, the measure $\nu_{>j}$ exists.

Next, we show that $\{\nu_{>j}\}_{j=1}^\infty$ is tight, and the key is to estimate $\nu_{>j}([0,1])$. Recall that
$$
\nu_{>j}=\delta_{N_{j+1}^{-1} B_{j+1}} * \delta_{(N_{j+1} N_{j+2})^{-1} B_{j+2}} * \cdots,
$$
this implies that
\begin{align*}
\nu_{>j}([0,1])
&=\nu_{>j}\big(\big\{\sum_{k=j+1}^{\f}\frac{b_k}{N_{j+1}N_{j+2}\cdots N_k}:b_k\in B_k , \forall k>j\big\}\bigcap [0,1]\big)\\
&\ge1-\nu_{>j}\big(\bigcup_{m=j+1}^\f\big\{\sum_{k=j+1}^{m}\frac{b_k}{N_{j+1}N_{j+2}\cdots N_k}:b_{k}\in B_{k,1},\forall k<m , \ b_m\in B_{m,2}  \big\}\big)\\
&\ge1-\sum_{m=j+1}^{\f}\nu_{>j}\big(\big\{\sum_{k=j+1}^{m}\frac{b_k}{N_{j+1}N_{j+2}\cdots N_k}:b_{k}\in B_{k,1},\forall k<m , \ b_m\in B_{m,2}  \big\}\big)\\
&\ge1-\sum_{m=j+1}^{\f}\frac{\# B_{k,2}}{\# B_{j+1}\# B_{j+2 }\cdots\# B_{k} }\\
&\ge1-\sum_{m=j+1}^{\f}\frac{\# B_{k,2}}{\# B_k }.
\end{align*}
	
Since
\[
\sum_{k=1}^{\infty} \frac{\# B_{k,2}}{\# B_k} < \infty,
\]
for each $\epsilon>0$, there exists $J_\epsilon\in\N$ such that for all $j>J_\epsilon$,
\[
\sum_{k=j+1}^{\infty} \frac{\# B_{k,2}}{\# B_k} <\epsilon,
\]
and it implies that $\nu_{>j}([0,1])>1-\epsilon$, for all $j>J_\epsilon$.
	
On the other hand, for each $1<j\le J_\epsilon$, there exists a compact subset $K_j\subseteq\R$ such that
$$
\nu_{>j}(K_j)>1-\epsilon.
$$
Let $K=[0,1]\bigcup \Big (\bigcup_{j=0}^{J_\epsilon}K_j$ \Big).  Then we have
$$
\nu_{>j}(K)>1-\epsilon,
$$
for all $j\in\N$, that is ,   $\{\nu_{>j}\}_{j=1}^\infty$ is tight, and the conclusion holds.	
\end{proof}

Given $l\in (0,1)$, we say $\{( N_k,B_k)\}_{k=1}^\infty $ satisfies  \textit{partial concentration condition (PCC)}, if  one of the following two conditions holds

\noindent(i) there exists a sequence $\{(b_{k,1},b_{k,2}):b_{k,1},b_{k,2}\in[0,N_k-1]\}_{k=1}^\infty $ such that $0<\frac{b_{k,2}-b_{k,1}}{N_k}<l$ satisfying
\[
\sum_{k=1}^{\f}\frac{\# B_{k,2}^{l}}{\# B_k}<\f ,
\]
where $B_{k,1}^{l}=B_k\cap [b_{k,1},b_{k,2}]$ and $B_{k,2}^{l}=B_k\backslash B_{k,1}^{l};$

\noindent(ii) there exists $c\in(0,1]$ and a sequence $\{(b_{k,1},b_{k,2}):b_{k,1},b_{k,2}\in[\frac{l}{2} N_k,(1-\frac{l}{2})N_k]\}_{k=1}^\infty $ such that
\[
\frac{\# B_{k}^{l}}{\# B_k}\ge c,
\]
for all $k\in \N$, where $B_{k}^{l}=B_k\cap [b_{k,1},b_{k,2}].$\\

Next, we use the following theorem to prove the spectrality of measures that may not have compact support.

\begin{theorem}\label{gap-condition}\cite{MZ24}
	Let $\{( N_k,B_k)\}_{k=1}^\infty $ be a sequence of admissible pairs satisfying {\it RBC}.
	Suppose that a subsequence $\{( N_{m_k},B_{m_k})\}_{k=1}^\infty  $ satisfies  PCC.  Then the infinite convolution $\mu$ exists and  is a spectral measure.
\end{theorem}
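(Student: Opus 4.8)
The plan is to treat existence and spectrality separately. Existence is immediate: the hypotheses include RBC, so $\mu$ exists by Theorem~\ref{existence}, and by Lemma~\ref{lem_tight} the normalized tails $\{\nu_{>k}\}_{k=1}^\infty$ are tight, a fact I use throughout. For spectrality I would build an explicit candidate spectrum from the Hadamard (admissible) structure, verify orthogonality via the factorization $\hat{\mu}=\hat{\mu}_k\cdot\hat{\mu}_{>k}$, and then reduce everything to a completeness statement attacked through the Jorgensen--Pedersen criterion, Theorem~\ref{Q}(iii).

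For the construction, each admissible pair $(N_k,B_k)$ comes with a dual set $L_k\subseteq\Z$, which I normalize so that $0\in L_k$ (Corollary~\ref{prop}) and $L_k\subseteq\{0,1,\dots,N_k-1\}$, making $(\delta_{N_k^{-1}B_k},L_k)$ a spectral pair. Rescaling (Corollary~\ref{change N}) shows that $(\delta_{(N_1\cdots N_k)^{-1}B_k},\,N_1\cdots N_{k-1}L_k)$ is a spectral pair, so the finite convolution $\mu_n$ from \eqref{def_mun} has spectrum $\Lambda_n=\sum_{k=1}^{n}N_1\cdots N_{k-1}L_k$ (empty product $=1$). Since $0\in L_k$, the $\Lambda_n$ are nested; set $\Lambda=\bigcup_{n\ge1}\Lambda_n\subseteq\Z$. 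Orthogonality is then automatic: two distinct points of $\Lambda$ lie in a common $\Lambda_n$, so $\hat{\mu}_n$ kills their difference by Theorem~\ref{Q}(i), and as $\hat{\mu}=\hat{\mu}_n\cdot\hat{\mu}_{>n}$ so does $\hat{\mu}$; hence $Q_{\mu,\Lambda}\le1$ by Theorem~\ref{Q}(ii).

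It remains to prove completeness, $Q_{\mu,\Lambda}\equiv1$. The backbone is a scaling identity: since $\mathcal{M}_{B_1}$ is $1$-periodic, $\Lambda=L_1+N_1\Lambda'$ with $\Lambda'$ the analogous candidate spectrum for $\nu_{>1}$, and $\sum_{\ell\in L_1}|\mathcal{M}_{B_1}(\tfrac{\xi+\ell}{N_1})|^2=1$, one gets
\[ Q_{\mu,\Lambda}(\xi)=\sum_{\ell\in L_1}\Big|\mathcal{M}_{B_1}\big(\tfrac{\xi+\ell}{N_1}\big)\Big|^2\,Q_{\nu_{>1},\Lambda'}\big(\tfrac{\xi+\ell}{N_1}\big), \]
exhibiting $Q_{\mu,\Lambda}(\xi)$ as a convex combination of tail quantities. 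Iterating expresses $Q_{\mu,\Lambda}(\xi)$ as a convex combination (weights summing to $1$) of values $Q_{\nu_{>n},\Lambda^{(n)}}(\eta)$, each of which dominates $|\hat{\nu}_{>n}(\eta)|^2$; selecting at every level the branch maximizing the tail quantity, in the spirit of the maximization argument in the proof of Theorem~\ref{new spectrum}, reduces completeness to a uniform lower bound. Concretely, I would prove that PCC forces $\{\nu_{>n}\}$ to be \emph{equi-positive}: there exist $\epsilon_0>0$ and a finite set of integer translates such that at every scale and every relevant grid point some translate of $\hat{\nu}_{>n}$ has modulus at least $\epsilon_0$. Alternative~(ii) of PCC keeps a fixed proportion $c$ of each concentrated digit set $B_{m_k}$ inside a central interval bounded away from both ends of $[0,N_{m_k}]$, which keeps the mass of $\nu_{>n}$ off the integers and hence $|\hat{\nu}_{>n}|$ off $0$; alternative~(i) forces almost all of $B_k$, up to the summable error $\#B_{k,2}^{l}/\#B_k$, into a short interval of relative length $<l$, so each $\mathcal{M}_{B_k}$ stays uniformly near $1$ on a fixed neighbourhood of the origin and the summability keeps the infinite tail product bounded below. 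Feeding equi-positivity into the iterated convex combination, together with the equicontinuity supplied by tightness (Lemma~\ref{equicontinuous}), upgrades $Q_{\mu,\Lambda}\le1$ to $Q_{\mu,\Lambda}\equiv1$, and Theorem~\ref{Q}(iii) makes $\Lambda$ a spectrum.

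The main obstacle is precisely this last step. Because the support of $\mu$ may be unbounded, $Q_{\mu,\Lambda}$ is not available as a well-behaved periodic continuous function and compactness arguments fail; the genuine danger is escape of mass to infinity in the limit defining $Q_{\mu,\Lambda}$. Ruling this out is exactly the purpose of the uniform lower bound, so the crux of the whole proof is the passage from the combinatorial concentration encoded in PCC to the analytic statement that the tail transforms $\hat{\nu}_{>n}$ do not collectively degenerate. This Fourier estimate, uniform in $n$ and valid in both alternatives of PCC, is where I expect the real work to lie.
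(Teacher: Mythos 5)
This theorem is not proved in the paper at all: it is imported verbatim from the preprint \cite{MZ24}, so there is no in-paper argument to compare yours against. Judged on its own terms, your outline follows what is clearly the intended route (the existence/tightness part via Theorem~\ref{existence} and Lemma~\ref{lem_tight} is correct and complete, and the candidate spectrum $\Lambda=\bigcup_n\Lambda_n$ with $\Lambda_n=\sum_{k=1}^{n}N_1\cdots N_{k-1}L_k$, orthogonality via the factorization of $\hat\mu$, and completeness via an equi-positivity criterion for the tails is exactly the Li--Miao--Wang machinery of \cite{Miao-2022} that \cite{MZ24} builds on). But the proposal is a plan, not a proof, and the gap sits precisely where you locate it yourself: the implication ``PCC $\Rightarrow$ equi-positivity of $\{\nu_{>n}\}$'' is asserted with heuristics rather than derived. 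Equi-positivity is a quantitative statement --- a single $\epsilon_0>0$ and a fixed finite set of integer shifts such that for every $n$ and every $x$ in a fundamental domain some shifted value $|\hat\nu_{>n}(x+k_x)|\ge\epsilon_0$ --- and ``the mass stays off the integers, hence $|\hat\nu_{>n}|$ stays off $0$'' does not establish it: a measure concentrated in the middle of $[0,1]$ can still have $\hat\nu(\xi)=0$ at non-integer $\xi$, so one must actually choose the shift and bound the modulus from below uniformly in $n$.

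Two further points in the sketch would fail as written. In alternative (i) of PCC the concentration interval $[b_{k,1},b_{k,2}]$ is only required to have relative length $<l$; it need not be near $0$, so the claim that ``each $\mathcal{M}_{B_k}$ stays uniformly near $1$ on a fixed neighbourhood of the origin'' is false without first factoring out the phase $e^{-2\pi i c_k\xi}$ coming from the centre $c_k$ of that interval, and one must also control the contribution of the exceptional digits $B_{k,2}^{l}$ using the summability hypothesis. Second, PCC is assumed only along a subsequence $\{(N_{m_k},B_{m_k})\}$, so equi-positivity can at best be obtained for the subfamily $\{\nu_{>m_k}\}$; your iterated convex-combination argument must then be arranged to stop exactly at the levels $m_k$, i.e.\ you need the version of the completeness criterion that accepts equi-positivity along a subsequence. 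None of this is unfixable --- it is the content of the cited result --- but as it stands the decisive Fourier estimate is missing, so the proposal does not yet constitute a proof.
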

\begin{theorem}\label{thm_necessary}
Given $\{(N_k,B_k)\}_{k=1}^\infty $ such that $B_k=\{0,1,2,\cdots,M_k-2,n_kM_k-1\}$ and $N_k\ge M_k\ge3$, $n_k\ge1$ are integers, for all $k\in \N$. Suppose that $\sum_{k=1}^{\f}\frac{1}{M_k}<\f$ and $\gcd(n_kM_k-1,M_k-2)=1$ for all $k$ such that $M_k$ is even and $M_k\ge5$. Then $\mu$ is a spectral measure if and only if $M_k|N_k$, for all $k\ge2$.
\end{theorem}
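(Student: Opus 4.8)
The plan is to treat the two implications separately, relying first on the uniform structure of the family. Each $B_k=\{0,1,\dots,M_k-2,n_kM_k-1\}$ is a complete residue system modulo $M_k$ with $\#B_k=M_k$, and the only element that can exceed $N_k-1$ is $n_kM_k-1$, so $\#B_{k,2}\le 1$. Hence $\sum_k \#B_{k,2}/\#B_k\le \sum_k 1/M_k<\f$, i.e.\ $\{(N_k,B_k)\}$ satisfies RBC; by Theorem~\ref{existence} the infinite convolution $\mu$ exists, and by Lemma~\ref{lem_tight} the family $\{\nu_{>k}\}$ is tight. These facts form the common backbone of both directions.

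For sufficiency, assume $M_k\mid N_k$ for $k\ge2$. By Corollary~\ref{change N} the spectrality of $\mu$ is unaffected by rescaling the first contraction, so I may assume $M_k\mid N_k$ for all $k\ge1$; then each $(N_k,B_k)$ is an admissible pair, with spectrum block $L_k=\{0,N_k/M_k,\dots,(M_k-1)N_k/M_k\}$, for which the associated matrix is a permuted DFT matrix. It remains to exhibit a subsequence satisfying PCC and invoke Theorem~\ref{gap-condition}. I would split the indices by the ratio $M_k/N_k$. If infinitely many $k$ satisfy $M_k\le N_k/2$, the bulk $\{0,1,\dots,M_k-2\}$ lies in the interval $[0,M_k-2]$ of relative length $<1/2$, and $(b_{k,1},b_{k,2})=(0,M_k-2)$ verifies condition (i) of PCC with $l=\tfrac12$, the discarded set being the single point $n_kM_k-1$ with $\sum 1/M_k<\f$. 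Otherwise infinitely many $k$ satisfy $M_k>N_k/2$, and then—after discarding the finitely many indices with $M_k$ bounded, finite because $\sum 1/M_k<\f$—the consecutive block $\{0,\dots,M_k-2\}$ meets the central window $[\tfrac14N_k,\tfrac34N_k]$ in a number of points comparable to $N_k$, giving condition (ii) with $l=\tfrac12$ and a uniform $c$. Either way Theorem~\ref{gap-condition} shows $\mu$ is spectral.

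For necessity, the whole point is to show every $B_k$ satisfies the uniform discrete zero condition, after which tightness lets me apply Theorem~\ref{consecutive digit set} to conclude $M_k\mid N_k$ for $k\ge2$. Writing $M=M_k$, $n=n_k$ and factoring $\sum_{b\in B_k}x^b=f_{B_k}(x)\,(1+x+\cdots+x^{M-1})$ as in Lemma~\ref{function}, the UDZ condition is equivalent to requiring that every zero of $\sum_{b\in B_k}x^b$ on the unit circle be an $M$-th root of unity different from $1$. Taking moduli in $x^{nM-1}(x-1)=-(x^{M-1}-1)$ on $|x|=1$ gives $|\sin(\theta/2)|=|\sin((M-1)\theta/2)|$, whose solutions are exactly $\theta\in\frac{2\pi}{M}\Z\cup\frac{2\pi}{M-2}\Z$; the $M$-th roots of unity are harmless, so the only candidate bad zeros are the $(M-2)$-th roots of unity. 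For such an $x\ne1$, reducing the exponent modulo $M-2$ yields $\sum_{b\in B_k}x^b=1+x^{2n-1}$, so $x$ is a zero precisely when $x^{2n-1}=-1$.

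The heart of the argument—and the step I expect to be the main obstacle—is ruling out these bad zeros with the arithmetic hypotheses. When $M$ is odd, $M-2$ is odd, so no $(M-2)$-th root of unity can satisfy $x^{2n-1}=-1$ (this forces the order of $x$ to be even), and UDZ holds automatically; the case $M=4$ is also automatic since the divisors of $M-2=2$ give only $x\in\{1,-1\}$ with $-1$ an $M$-th root of unity. For even $M\ge6$, suppose $x$ has order $d$, $x^{2n-1}=-1$, and $d>2$; then $d$ is even with $d/2$ odd and $(d/2)\mid(2n-1)$, while also $(d/2)\mid(M-2)$. Using $n_kM_k-1\equiv 2n-1\pmod{M_k-2}$ this forces $(d/2)\mid\gcd(n_kM_k-1,M_k-2)=1$, hence $d=2$, i.e.\ $x=-1$, again an allowed $M$-th root of unity. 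Thus the hypothesis $\gcd(n_kM_k-1,M_k-2)=1$ for even $M_k\ge5$ exactly eliminates the bad zeros, every $B_k$ satisfies UDZ, and Theorem~\ref{consecutive digit set} completes the proof. The delicate points throughout are the bookkeeping of which roots of unity survive division by $x^M-1$ and the parity analysis of $d$, since an off-by-one in the exponent $2n-1$ or a misreading of the coincidences between $M$-th and $(M-2)$-th roots of unity would break the equivalence.
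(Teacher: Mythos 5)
Your proposal is correct and follows essentially the same route as the paper: RBC gives existence and tightness via Lemma~\ref{lem_tight}; sufficiency is obtained by the same case split on $M_k/N_k$ versus $1/2$ to produce a PCC subsequence and invoke Theorem~\ref{gap-condition}; necessity is obtained by verifying UDZ for each $B_k$ and applying Theorem~\ref{consecutive digit set}. The only difference is cosmetic: you locate the candidate zeros by taking moduli in $x^{n_kM_k-1}(x-1)=1-x^{M_k-1}$ (the $\lvert\sin\rvert$ identity) and finish the even case with an order-of-element argument, whereas the paper reaches the same set of $M_k$-th and $(M_k-2)$-th roots of unity through a direct case analysis of that equation.
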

\begin{proof}
Since,
\[
\sum_{k=1}^{\infty} \frac{\# B_{k,2}}{\# B_k}=\sum_{k=1}^{\f}\frac{1}{M_k} < \infty,
\]
by Lemma \ref{lem_tight}, we know that $\{(N_k,B_k)\}$ satisfies RBC and $\{\nu_{>k}\}_{k=1}^\infty$ is tight.

First, we prove the sufficiency. Suppose that $M_k|N_k$, for all $k\ge2$.  By Corollary \ref{change N}, without loss of generality, we assume that $M_k|N_k$, for all $k\in\N$. Thus $\{(N_k,B_k)\}$ is a sequence of admissible pair, by Theorem \ref{gap-condition}, it is sufficient to show that there exists a subsequence of $\{(N_k,B_k)\}$ satisfies $PCC$.

For $k\in\N$, either $\frac{M_k}{N_k}\ge\frac{1}{2}$ appears infinitely many times, or $\frac{M_k}{N_k}<\frac{1}{2}$ appears infinitely many times.

For $\#\{k\in\N:\frac{M_k}{N_k}\ge\frac{1}{2}\}=\f$, without loss of generality, we assume that $\{(N_{m_k},B_{m_k})\}$ is a subsequence of $\{(N_k,B_k)\}$ and such that $\frac{M_{m_k}}{N_{m_k}}\ge\frac{1}{2}$, for all $k\in\N$.  By choosing  $l=\frac{1}{2}$ and $(b_{m_k,1},b_{m_k,2})=(\frac{N_{m_k}}{4},\frac{3N_{m_k}}{4})$, it is clear that
\[
\frac{\# B_{m_k}^{l}}{\# B_{m_k}}\ge \frac{1}{8},
\]
for all $k\in \N$, where $B_{m_k}^{l}=B_{m_k}\cap [\frac{N_{m_k}}{4},\frac{3N_{m_k}}{4}]. $ This implies that $\{(N_{m_k},B_{m_k})\}$ satisfies PCC.

For  $\#\{k\in\N:\frac{M_k}{N_k}<\frac{1}{2}\}=\f$, without loss of generality, we assume that $\{(N_{t_k},B_{t_k})\}$ is a subsequence of $\{(N_k,B_k)\}$ and such that $\frac{M_{t_k}}{N_{t_k}}<\frac{1}{2}$, for all $k\in\N$. By choose $l=\frac{1}{2}$ and $(b_{t_k,1},b_{t_k,2})=(0,\frac{N_{t_k}}{2})$, it is clear that $B_{t_k,2}^{l}\subseteq\{n_{t_k}M_{t_k}-1\}$, for all $k\in\N$, thus
\[
\sum_{k=1}^{\f}\frac{\# B_{t_k,2}^{l}}{\# B_{t_k}}\le\sum_{k=1}^{\f}\frac{1}{M_{t_k}}\le\sum_{k=1}^{\f}\frac{1}{M_k}<\f. 
\]
Hence $\{(N_{t_k},B_{t_k})\}$ satisfies PCC,  and there exists a subsequence of $\{(N_k,B_k)\}$ satisfies PCC, by Theorem \ref{gap-condition}, the infinite convolution $\mu$ exists and  is a spectral measure.

Next, we prove the necessity. Since $\mu$ is a spectral measure and $B_k$ is a  complete residue system modulo $M_k$, for all $k\in \N$, by Theorem \ref{gap-condition}, it is sufficient to show that $B_k$ satisfies uniform discrete zero condition for every $k\geq 1$.

Suppose that $n_k=1$ for all $k\in \N$, i.e., $B_k=\{0,1,2,\cdots,M_k-1\}$  and $N_k\ge M_k\ge2$. It is clear that
$$
f_{B_k}=1.
$$
 Immediately, we have that $\mathcal{Z}(f_{B_k}\circ e^{-2\pi i x})=\emptyset$, and it implies that $B_k$ satisfies {\it UDZ}, for all $k\in\N$. Otherwise we assume that $n_k\ge2$ for some $k\in \N$. We set
\begin{equation*}\label{eq_g}
g_{B_k}(x)=1+x+\cdots+x^{M_k-2}+x^{M_k-1+n_kM_k}=\frac{1-x^{M_k-1}}{1-x}+x^{n_kM_k-1}.
\end{equation*}
If $x_0\in\mathcal{Z}(g_{B_k})$ and $|x_0|=1$, we have
$$
1-x_0^{M_k-1}=(x_0-1)x_0^{n_kM_k-1}, \ |x_0|=1 , \text{and} \ x_0\ne1,
$$
which implies that
$$
\left\{
\begin{array}{rl}
	1&=x_0^{n_kM_k},\\
	x_0^{M_k-1}&=x_0^{n_kM_k-1},\\
	x_0&\ne1,
\end{array}\right.
\qquad \textit{or} \qquad
\left\{
\begin{array}{rl}
	1&=-x_0^{n_kM_k-1},\\
	-x_0^{M_k-1}&=x_0^{n_kM_k},\\
	x_0&\ne1.
\end{array}\right.
$$
Hence there exists $j_1, j_2\in\Z$ such that
$$
\left\{
\begin{array}{rl}
	x_0&=e^{2\pi i \frac{j_1}{n_kM_k}},\\
	x_0&=e^{2\pi i \frac{j_2}{(n_k-1)M_k}}\ \text{or}\ 0,\\
	x_0&\ne1,
\end{array}\right.
\qquad \textit{or} \qquad
\left\{
\begin{array}{rl}
	x_0&=e^{2\pi i \frac{2j_1+1}{2n_kM_k-2}},\\
	x_0&=e^{2\pi i \frac{2j_2+1}{2(n_k-1)M_k+2}}\ \text{or}\ 0,\\
	x_0&\ne1.
\end{array}\right.
$$
Therefore, we have
$$
\frac{j_1}{n_kM_k}-\frac{j_2}{(n_k-1)M_k}\in\Z\
\qquad \textit{or} \qquad  \frac{2j_1+1}{2n_kM_k-2}-\frac{2j_2+1}{2(n_k-1)M_k+2}\in\Z.
$$
Without loss of generality, we assume that $j_1,j_2\in\Z$ satisfying
$$
\frac{j_1}{n_kM_k}=\frac{j_2}{(n_k-1)M_k} \qquad \textit{or} \qquad    \frac{2j_1+1}{2n_kM_k-2}=\frac{2j_2+1}{2(n_k-1)M_k+2},
$$
which implies that
$$
\ n_k|j_1\  \text{or}\ 2j_2+1=\frac{(j_1-j_2)(2(n_k-1)M_k+2)}{M_k-2}.
$$
Thus there exists $j\in\Z$ satisfying $j\nmid M_k$ and $j\nmid M_k-2$ such that
$$
x_0=e^{2\pi i \frac{j}{M_k}} \ \text{or} \ e^{2\pi i \frac{j}{M_k-2}}.
$$

For $M_k=3, 4$, obviously, we have
$$
\mathcal{Z}(g_{B_k}\circ e^{-2\pi i x})\subseteq\bigg\{\frac{j}{M_k}:j\in\Z\setminus M_k\Z\bigg\}.
$$

For $M_k\ge5$, if there exists $j\in\Z\setminus (M_k-2)\Z$ such that
$$
g_{B_k}(e^{2\pi i \frac{j}{M_k-2}})=\sum_{t=0}^{M_k-2}e^{2\pi i \frac{tj}{M_k-2}}+e^{2\pi i \frac{(n_kM_k-1)j}{M_k-2}}=1+e^{2\pi i \frac{(n_kM_k-1)j}{M_k-2}}=0,
$$
then we have
$2\cdot\frac{(n_kM_k-1)j}{M_k-2}$ is odd, which implies that $M_k-2$ is even.
Thus, if $M_k$ is odd, we have
$$
\mathcal{Z}(g_{B_k}\circ e^{-2\pi i x})\subseteq\bigg\{\frac{j}{M_k}:j\in\Z\setminus M_k\Z\bigg\}.
$$

If $M_k$ is even, since $\gcd(n_kM_k-1,M_k-2)=1$, we have $2\cdot\frac{j}{M_k-2}$ is odd,
which implies that
$$
\mathcal{Z}(g_{B_k}\circ e^{-2\pi i x})\subseteq\bigg\{\frac{j}{M_k}:j\in\Z\setminus M_k\Z\bigg\}\cup\bigg\{\frac{j}{2}:j\in\Z\setminus 2\Z\bigg\}=\bigg\{\frac{j}{M_k}:j\in\Z\setminus M_k\Z\bigg\}.
$$
Since
$$
g_{B_k}(x)=f_{B_k}(x)\sum_{k=0}^{M-1}x^k,
$$
we have
$$
\mathcal{Z}(f_{B_k}\circ e^{-2\pi i x})\subseteq\mathcal{Z}(g_{B_k}\circ e^{-2\pi i x})\subseteq\bigg\{\frac{j}{M_k}:j\in\Z\setminus M_k\Z\bigg\},
$$
which means that
$B_k$ satisfies uniform discrete zero condition for every $k\geq 1$.
Then, if the infinite convolution $\mu$ is a spectral measure, by Theorem \ref{consecutive digit set}, we have $M_k|N_k$, for all $k\ge2$.
\end{proof}

\begin{proof}[Proof of Theorem~\ref{thm_consecutive}]
The sufficiency  and necessity is directly from Theorem \ref{thm_necessary}.
\end{proof}

 \section*{Acknowledgments}
The initial research question was suggested by Prof. He Xingang, and the authors wish to thank Prof. Xinggang He and Prof. Lixiang An for their helpful comments during the research.

\end{document}